\theoremstyle{plain}
\newtheorem{thm}{Theorem}[section]
\theoremstyle{plain}
\newtheorem{lem}[thm]{Lemma}
\newtheorem{prop}[thm]{Proposition}
\theoremstyle{definition}
\newtheorem{defi}{Definition}[section]
\newtheorem{rem}{Remark}[section]
\newenvironment{Assumptions}
{
\setcounter{enumi}{0}

\begin{enumerate}}
{\end{enumerate} }
\newcommand{\R}{\ensuremath{\mathbb{R}}}
\newcommand{\goto}{\ensuremath{\rightarrow}}
\newcommand{\grad}{\ensuremath{\nabla}}
\newcommand{\eps}{\ensuremath{\varepsilon}}
\numberwithin{equation}{section} \allowdisplaybreaks
\title[On stochastic optimal control of evolutionary $p$-Laplace equation]
{Stochastic optimal control of a evolutionary $p$-Laplace equation with multiplicative L\'{e}vy noise.}
\date{}
\subjclass[2000]{45K05, 46S50, 49L20, 49L25, 91A23, 93E20}
\keywords{ Evolutionary $p$-Laplace equation, Stochastic PDEs, Weak solution, Skorokhod theorem .
}
\author[Ananta K. Majee]{Ananta K. Majee}
\address[Ananta K. Majee]{\newline
Department of Mathematics, Indian Institute of Technology Delhi,
Hauz Khas, New Delhi-110016, India.}
\email[]{majee@maths.iitd.ac.in}
\thanks{}
\begin{document}
\begin{abstract}
In this article, we are interested in an initial value optimal control problem for a evolutionary $p$-Laplace
equation driven by multiplicative L\'{e}vy noise. We first present wellposedness of a weak solution by using an implicit time discretization of
the problem, along with the Jakubowski version of the Skorokhod theorem for a non-metric space. We then formulate associated control problem, and establish existence of an optimal solution by using variational 
method and exploiting the convexity property of the cost functional. 
\end{abstract}

\maketitle

\section{Introduction}
The last couple of decades have witnessed remarkable advances on the larger area
of stochastic partial differential equations that are driven by L\'{e}vy noise. An worthy
reference on this subject is \cite{peszat}. In this article, we are interested in the specific problem of evolution equation with L\'{e}vy noise, and aim 
to prove existence of a weak optimal solution of an initial value control evolutionary $p$-Laplace equation driven  by L\'{e}vy noise. A formal description of our problem as follows. 
Let $\big(\Omega, \mathcal{F}, \mathbb{P}, \{\mathcal{F}_t\}_{t\ge 0} \big)$ be a filtered probability space satisfying the usual hypotheses \textit{i.e.}
  $\{\mathcal{F}_t\}_{t\ge 0}$ is a right-continuous filtration such that $\mathcal{F}_0$ contains all the $\mathbb{P}$-null subsets of $(\Omega, \mathcal{F})$. In addition, let
   $N({\rm d}z,{\rm d}t)$ be a time homogeneous Poisson random measure \footnote[1]{ For the definition of a time homogeneous Poisson random measure, we refer to see \cite[pp. 631]{erika2009}.} on $\R$ with intensity measure $m(\,{\rm d}z)$ with respect to the same stochastic basis.
 We are interested in the initial value control evolution equation of the type 
 \begin{equation}\label{eq:p-laplace}
 \begin{aligned}
  {\rm d}u -\mbox{div}_x\big(|\grad u|^{p-2}\grad u + \vec{f}(u)\big)\,{\rm d}t &= \int_{|z|>0}\eta(u;z)\widetilde{N}(dz,{\rm d}t) \quad \text{in}\,\,\Omega \times D_T, \\
  u&=0 \quad \text{on}\,\, \Omega \times \partial D_T\,, \\
  u(0,\cdot)&=u_0(\cdot) + U(\cdot) \quad \text{in}\,\,\Omega \times  D\,,
  \end{aligned}
 \end{equation}
 where $p>2$, $D_T=(0,T)\times D$ with $T>0$ fixed, $D\subset \R^d$ is a bounded domain with Lipschitz boundary $\partial D$, $\partial D_T=(0,T)\times \partial D$, $u$ is the unknown random 
 scalar valued function, $\vec{f}:\R\goto \R^d$ is a given flux function, and $ \widetilde{N}({\rm d}z,{\rm d}t)= N({\rm d}z,{\rm d}t)-\, m({\rm d}z)\,{\rm d}t $, the compensated time homogeneous Poisson random measure. Furthermore, $(u,z)\mapsto \eta(u; z)$
 is a real valued function defined on the domain $ \R\times \R$. The stochastic integral in the right hand side of \eqref{eq:p-laplace} is defined
 in the L\'{e}vy-It\^{o} sense.
 \vspace{.1cm}
 
 We point out that adding a Brownian component to the L\'{e}vy noise term on the right hand side of 
 \eqref{eq:p-laplace} would make it more general, and the results of this paper are still valid under appropriate conditions.
\vspace{.2cm}

The equation \eqref{eq:p-laplace} could be viewed as a stochastic perturbation of a evolution $p$-Laplacian equation with nonlinear sources.
Equations of this type arise in the field of mechanics, physics and biology \cite{dibenedetto, wu}. In the case $\eta=0$, the equation \eqref{eq:p-laplace}
becomes a deterministic evolution $p$-Laplacian equation with nonlinear sources, and there is a plethora literature (see \cite{nabana,zhao} and references 
therein) for its wellposedness. 
\vspace{.1cm}

Due to more technical novelties, the study of wellposedness result in case of nonlinear $p$-evolutionary equation with nonlinear stochastic forcing is more subtle. The presence of nonlinearity in the drift and 
diffusion terms in equation prevents us to define a semi-group solution. Moreover, because of nonlinear perturbation ${\rm div}_x f(u)$ of $p$-Laplace operator~$(p>2)$, one can not use the results of monotone or 
locally monotone SPDEs, see e.g., \cite{Liu, Pardoux}. In a recent article \cite{zimmermann}, the authors have considered \eqref{eq:p-laplace} with cylindrical Wiener process $W=\{W_t: t\in [0,T]\}$
in $L^2(D)$, and proved wellposedness of strong solution. In \cite{zimmermann}, existence of a martingale solution is shown by constructing an approximate solution 
(via implicit time discretization) and deriving its {\it  a-priori} estimates which are used to apply Jakubowski-Skorokhod theorem in a non-metric space. Then, using an argument
of path-wise uniqueness and Gy\"{o}ngy-Krylov characterization \cite{gyongy-krylov} of convergence in probability, the authors established wellposedness of strong solution.  
\vspace{.1cm}

In this paper, our goal is to find a weak admissible solution $\pi^*:=\big( \Omega^*, \mathcal{F}^*, \mathbb{P}^*,\{ \mathcal{F}_t^*\},  N^*, u^*, U^*\big)$ which minimizes
\begin{equation} \label{eq:control-problem}
 \begin{aligned}
  \mathcal{J}(\pi)= \mathbb{E}\Big[ \int_0^T \|u(t)-u_{\rm tar}(t)\|_{L^2(D)}^2\,{\rm d}t + \|U\|_{W^{1,p}}^p + \Psi(u(T))\Big] \\
  \text{with}~~\pi=\big( \Omega, \mathcal{F},\mathbb{P},\{ \mathcal{F}_t\},  N, u, U\big)~~\text{subject to}~~\eqref{eq:p-laplace}\,,
 \end{aligned}
\end{equation}
for a given deterministic target profile $u_{\rm tar}$, and terminal payoff $\Psi$.  The existing literature (see e.g. \cite{Nagase}) on stochastic optimal control with SPDEs
mainly considers those which has a mild solution, which is not available for problem \eqref{eq:p-laplace}. For this reason, we use variational method to construct a minimizer $\pi^*$ of \eqref{eq:control-problem}, 
see also \cite{Serrano, TMAV-2019}. Being motivated from \cite{ Serrano, TMAV-2019,zimmermann}, our aim is twofold:

\begin{itemize}
 \item [i)] Firstly, we prove existence of a weak solution of the problem 
\eqref{eq:p-laplace}. We construct an approximate solutions $\tilde{u}_{\Delta t}:=\big\{ \tilde{u}_{\Delta t}(t); t\in [0,T]\}$ (cf.~\eqref{eq:defi-piecewise-affine-function}) via implicit time 
discretization, and derive its {\it a-priori} bounds which is used to show the tightness of the laws of sequence $(\tilde{u}_{\Delta t})$, denoted by $\mathcal{L}(\tilde{u}_{\Delta t})$, in some appropriate 
space via Aldous condition (see Definition \ref{defi:aldous-condition}). We then use the Jakubowski version of the Skorokhod theorem in a non-metric space to show existence of
a weak solution of \eqref{eq:p-laplace}. We use smooth approximation of absolute value function and then apply It\^{o}-L\'{e}vy formula, and pass to the limit as approximation parameter goes to zero to 
show the path-wise uniqueness of weak solutions. 
\item[ii)] Secondly, we construct a minimizer $\pi^*$ of the control problem \eqref{eq:control-problem} by considering a minimizing weak admissible solutions
$\pi_n=\big( \Omega_n, \mathcal{F}_n,\mathbb{P}_n,\{ \mathcal{F}_t^n\},  N_n, u_n, U_n\big)$ along with Skorokhod's theorem and exploiting the convexity property of the cost functional $\mathcal{J}$ with respect to
the control variable. 
\end{itemize}
\vspace{.1cm}

The remaining of the paper is organized as follows. We state the assumptions, detail the technical framework and state the main results
in Section \ref{sec:technical-framework}. In Section \ref{sec:existence-weak-solu}, we construct an approximate solutions,
derive its {\it a-priori} estimates, show the tightness of the laws of the approximate solutions in some space, and then apply
the Jakubowski version of the Skorokhod theorem to have a existence of a weak solution of the problem. Moreover, 
path-wise uniqueness of weak solutions is shown in  Subsection \ref{subsec:uniqueness}. The final section is devoted to establish existence of an optimal solution of the initial value control problem 
\eqref{eq:control-problem}.
 \section{Technical framework and statement of the main results}\label{sec:technical-framework}
 Throughout this paper, we use the letter $C$ to denote various generic constants.  In the sequel, we denote by $\big\langle \cdot,\cdot\big\rangle$,
 the pairing between $W_0^{1,p}(D)$ and $W^{-1,p^\prime}(D)$ where $p^\prime$ denotes the convex conjugate of $p$.
 \vspace{.1cm}
 
 In the theory of stochastic evolution equations, two types of solution concept are considered namely strong solution and weak solution. A strong solution is typically 
 an analytically weak solution (in space) on a given stochastic basis. In general, for a nonlinear non-Lipschitz drift operator, one may not able to prove existence of a strong solution, 
 and therefore needs to consider concept of weak solution.
 
 \begin{defi}\label{defi:weak-solun}(Weak solution)
 A weak solution of \eqref{eq:p-laplace} is a $7$-tuple $\bar{\pi}=\big( \bar{\Omega}, \bar{\mathcal{F}}, \bar{\mathbb{P}}, \{\bar{\mathcal{F}}_t\},  
 \bar{N}, \bar{u}, \bar{U}\big)$ such that 
 \begin{itemize}
  \item [i)] $(\bar{\Omega}, \bar{\mathcal{F}},\bar{\mathbb{P}})$ is a complete probability space endowed with the filtration 
  $\{\bar{\mathcal{F}}_t\}$ satisfying the usual hypotheses.
  \item[ii)] $\bar{N}$ is a time-homogeneous Poisson random measure on $\R$ with intensity measure $m(dz)$ with respect to the filtration 
  $\{\bar{\mathcal{F}}_t\}$.
  \item[iii)] $\bar{U}$ is measurable with $\bar{\mathbb{P}}$-a.s. $\omega \in \bar{\Omega}$, $\bar{U}(\omega,\cdot)\in W^{1,p}(D)$.
  \item[iv)] $\bar{u}: \bar{\Omega}\times [0,T]\rightarrow L^2(D)$ is an $W_0^{1,p}(D)$-valued $\{\bar{\mathcal{F}}_t\}$-predictable stochastic process such that $\bar{\mathbb{P}}$-a.s.,
  \begin{itemize}
   \item [a)] $\bar{u}\in L^p\big(0,T; W_0^{1,p}(D)\big)\cap L^\infty\big(0,T; L^2(D)\big)$ and $\bar{u}(0,\cdot)=u_0 + \bar{U}$ in $L^2(D)$.
   \item[b)] for all $t\in [0,T]$, there holds 
   \begin{align*}
   \bar{u}(t)= u_0 + \bar{U} + \int_{0}^t \mbox{div}_x \big( |\nabla \bar{u}|^{p-2}\nabla \bar{u} + \vec{f}(\bar{u})\big)\,{\rm d}s + 
   \int_0^t \int_{|z|>0}\eta(\bar{u};z)\widetilde{\bar{N}}({\rm d}z,{\rm d}s)  \quad \text{in}\quad L^2(D). 
  \end{align*}
  \end{itemize}
 \end{itemize}
 \end{defi}
 We show the wellposedness of weak solution of \eqref{eq:p-laplace}, in the sense of Definition \ref{defi:weak-solun}, under the following assumptions: 
\begin{Assumptions}
\item \label{A1} $u_0 \in L^2(D)$.
\item \label{A2}  $\vec{f}: \R \mapsto \R^d$ is $C^2$ and Lipschitz continuous with $\vec{f}(0)=0$.
 
\item \label{A3} $\eta(0;z)= 0$ for all $z\in \R$. Moreover, there exists positive constant $0<\lambda^* <1$ such that \footnote[2]{Here we denote $ x\wedge y:= \min\{x,y\}$.}
$ ~\text{for all}~  u,v \in \R$ and $z\in \R$
 \begin{align*}
 | \eta(u;z)-\eta(v;z)|  & \leq  \lambda^* |u-v|(1\wedge |z|).
 \end{align*}
\item \label{A4}
The L\'{e}vy measure $m({\rm d}z)$ is a Radon measure on $\R \setminus \{0\}$ with a possible singularity at $z=0$, which satisfies 
\begin{align*}
 c_{\eta}:= \int_{|z|>0} (1\wedge |z|^2)\,m({\rm d}z) < + \infty. 
\end{align*}
\end{Assumptions}
\begin{thm}\label{thm:existence-weak}
Let the assumptions \ref{A1}-\ref{A4} be true. Let $\big(\Omega, \mathcal{F}, \mathbb{P}, \{\mathcal{F}_t\} \big)$ be a given
filtered probability space satisfying the usual hypotheses and $N$ be a time-homogeneous Poisson random measure on $\R$ with intensity measure $m({\rm d}z)$
defined on $\big(\Omega, \mathcal{F}, \mathbb{P},\{\mathcal{F}_t\}\big)$, and $U\in L^p(\Omega; W^{1,p}(D))$. Then there exists a unique weak solution
$\bar{\pi}=\big( \bar{\Omega}, \bar{\mathcal{F}}, \bar{\mathbb{P}},  \{\bar{\mathcal{F}}_t\}, 
 \bar{N}, \bar{u}, \bar{U}\big)$ of the problem \eqref{eq:p-laplace}
in the sense of Definition \ref{defi:weak-solun}. Moreover
\begin{itemize}
 \item [i)] $\mathcal{L}(U)=\mathcal{L}(\bar{U})$ on $W^{1,p}(D)$ with $\bar{U}\in L^p(\bar{\Omega}; W^{1,p}(D))$, and
 \item[ii)] there exists a positive constant $C>0$ such that
\begin{align}
 \bar{\mathbb{E}}\Big[\sup_{0\le t\le T} \|\bar{u}(t)\|_{L^2(D)}^2 + \int_0^T \|\bar{u}(t)\|_{W_0^{1,p}(D)}^p\,{\rm d}t \Big] \le C \,\bar{\mathbb{E}}\Big[ \|u_0\|_{L^2(D)}^2 + \|U\|_{W^{1,p}(D)}^p\Big]\,.
 \label{esti:bound-weak-solun}
\end{align}
\end{itemize}
\end{thm}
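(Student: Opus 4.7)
The plan is to split the proof into two essentially independent parts: (i) existence of at least one weak solution, constructed through an implicit time discretization and a non-metric Skorokhod representation; and (ii) pathwise uniqueness, obtained by an It\^o--L\'evy comparison argument. Existence together with pathwise uniqueness then yields the uniqueness statement of the theorem, and the a priori bound~\eqref{esti:bound-weak-solun} is inherited from the discrete scheme by lower semicontinuity under weak convergence.

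For existence, I would discretize $[0,T]$ uniformly with step $\Delta t = T/N$ and build iterates $u^n \in W_0^{1,p}(D)$ from $u^0 = u_0 + U$ via the implicit Euler scheme
\begin{align*}
u^n - u^{n-1} - \Delta t\,\mbox{div}_x\bigl(|\nabla u^n|^{p-2}\nabla u^n + \vec f(u^n)\bigr) = \int_{t_{n-1}}^{t_n}\!\!\int_{|z|>0}\eta(u^{n-1};z)\,\widetilde N({\rm d}z,{\rm d}s).
\end{align*}
Solvability at each step is standard monotone operator theory (Browder--Minty / pseudomonotone perturbation), since $v\mapsto -\mbox{div}(|\nabla v|^{p-2}\nabla v)$ is strictly monotone, coercive, and hemicontinuous on $W_0^{1,p}(D)$, with the Lipschitz flux of (A.2) providing a compact lower-order perturbation. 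Testing with $u^n$, exploiting the identity $(a-b)a = \tfrac12(a^2 - b^2 + (a-b)^2)$, controlling the jump variance via (A.3)--(A.4), and using the discrete Burkholder--Davis--Gundy inequality, yields the energy estimate
\begin{align*}
\mathbb{E}\Bigl[\max_{1\le n\le N}\|u^n\|_{L^2(D)}^2 + \Delta t\sum_{n=1}^{N}\|u^n\|_{W_0^{1,p}(D)}^p\Bigr] \le C\,\mathbb{E}\bigl[\|u_0\|_{L^2(D)}^2 + \|U\|_{W^{1,p}(D)}^p\bigr].
\end{align*}
Using the piecewise affine interpolant $\tilde u_{\Delta t}$ of~\eqref{eq:defi-piecewise-affine-function}, I would combine these bounds with an Aldous-type stochastic equicontinuity estimate in $W^{-1,p'}(D)$ to establish tightness of $\mathcal{L}(\tilde u_{\Delta t})$ on an appropriate quasi-Polish product space incorporating the weak topology on $L^p(0,T;W_0^{1,p}(D))$ and the canonical path-space for the Poisson random measures. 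Invoking the Jakubowski--Skorokhod representation then yields a new stochastic basis $(\bar\Omega,\bar{\mathcal F},\bar{\mathbb P})$ and limits $(\bar u,\bar U,\bar N)$ along which laws are preserved; in particular $\mathcal{L}(\bar U)=\mathcal{L}(U)$.

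The main obstacle will be identifying the nonlinear operator in the limit, since weak convergence of $\nabla\tilde u_{\Delta t}$ in $L^p$ does not commute with $v\mapsto|\nabla v|^{p-2}\nabla v$. I would first pass to a weak limit $\chi \in L^{p'}(\bar\Omega\times(0,T); L^{p'}(D;\mathbb{R}^d))$ and show that $\bar u$ satisfies the equation with $\chi$ in place of $|\nabla\bar u|^{p-2}\nabla\bar u$, then close the gap via Minty's monotonicity trick: apply the It\^o--L\'evy formula to $\|\tilde u_{\Delta t}(t)\|_{L^2}^2$ and to $\|\bar u(t)\|_{L^2}^2$, pass to the limit in the resulting energy identity using lower semicontinuity of the $L^2$-norm and convergence of the predictable quadratic variation of the jump integral via (A.3)--(A.4), and exploit the monotonicity inequality $\bigl(|a|^{p-2}a - |b|^{p-2}b\bigr)\cdot(a-b) \ge 0$ to conclude $\chi = |\nabla\bar u|^{p-2}\nabla\bar u$ almost everywhere. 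Convergence of the stochastic integral to $\int\eta(\bar u;z)\,\widetilde{\bar N}({\rm d}z,{\rm d}s)$ follows from (A.3) together with standard results on convergence of stochastic integrals against Poisson random measures.

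For pathwise uniqueness, take two weak solutions $\bar u_1, \bar u_2$ on a common stochastic basis with the same initial data and set $w = \bar u_1 - \bar u_2$. Following the strategy indicated in the introduction, apply the It\^o--L\'evy formula to $\int_D \phi_\varepsilon(w(t,x))\,{\rm d}x$, where $\phi_\varepsilon \in C^2(\mathbb{R})$ is a smooth approximation of the absolute value. The $p$-Laplace contribution is dissipative thanks to monotonicity combined with $\phi_\varepsilon'' \ge 0$; the flux contribution from (A.2) vanishes in the limit $\varepsilon \to 0$ using the $C^2$-Lipschitz hypothesis on $\vec f$; and the compensated jump contribution is controlled by (A.3)--(A.4), using in particular the pointwise bound
\begin{align*}
\int_{|z|>0}\bigl|\eta(\bar u_1;z) - \eta(\bar u_2;z)\bigr|^2\,m({\rm d}z) \le (\lambda^*)^2\,c_\eta\,|w|^2.
\end{align*}
Letting $\varepsilon \to 0$ and applying Gronwall's lemma forces $w \equiv 0$, yielding pathwise uniqueness and completing the proof.
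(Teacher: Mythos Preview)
Your existence strategy is essentially the paper's: implicit Euler scheme, pseudo-monotone solvability, the energy identity via $(a-b)a=\tfrac12(a^2-b^2+(a-b)^2)$, Aldous-type tightness in $W^{-1,p'}(D)$, Jakubowski--Skorokhod representation on a quasi-Polish path space, and identification of the weak limit of $|\nabla v|^{p-2}\nabla v$ by comparing the discrete energy inequality with the It\^o--L\'evy energy identity for the limit and invoking the uniform monotonicity $(|a|^{p-2}a-|b|^{p-2}b)\cdot(a-b)\ge C|a-b|^p$. Nothing to add there.

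There is, however, a genuine gap in your uniqueness argument for the jump term. After the It\^o--L\'evy formula for $\int_D\phi_\varepsilon(w)\,{\rm d}x$ the compensator contribution reads
\[
\mathbb{E}\int_0^t\!\int_{|z|>0}\!\int_D\!\int_0^1 (1-\lambda)\,\phi_\varepsilon''\bigl(w+\lambda\,\Delta\eta\bigr)\,|\Delta\eta|^2\,{\rm d}\lambda\,{\rm d}x\,m({\rm d}z)\,{\rm d}s,\qquad \Delta\eta:=\eta(u_1;z)-\eta(u_2;z).
\]
Your bound $\int|\Delta\eta|^2\,m({\rm d}z)\le(\lambda^*)^2c_\eta|w|^2$ is correct, but combining it with $\phi_\varepsilon''\le M_2/\varepsilon$ only yields a term of order $\varepsilon^{-1}\mathbb{E}\|w\|_{L^2}^2$, which blows up as $\varepsilon\to0$ and is in the wrong norm for an $L^1$-Gronwall. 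Switching to an $L^2$ functional would tame the jump term but would leave the flux contribution $\int(\vec f(u_1)-\vec f(u_2))\cdot\nabla w$ producing, after Young's inequality, a sublinear term $C\|w\|_{L^2}^{p'}$ with $p'<2$, for which the comparison ODE $y'=Cy^{p'/2}$ has nonzero solutions with $y(0)=0$; Gronwall then fails.

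The paper closes this by using the full strength of $\lambda^*<1$ in \ref{A3}: since $|\Delta\eta|\le\lambda^*|w|(1\wedge|z|)$, one has $|w+\lambda\,\Delta\eta|\ge(1-\lambda^*)|w|$ for every $\lambda\in[0,1]$, so the support condition hidden in $\phi_\varepsilon''$ forces $|w|\le\varepsilon/(1-\lambda^*)$. Hence the integrand is dominated by $C(1-\lambda^*)^{-2}(w+\lambda\,\Delta\eta)^2\phi_\varepsilon''(w+\lambda\,\Delta\eta)\le C|w+\lambda\,\Delta\eta|\mathbf{1}_{\{|w+\lambda\,\Delta\eta|\le\varepsilon\}}(1\wedge|z|^2)$, which tends to $0$ as $\varepsilon\to0$ by dominated convergence. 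No Gronwall term survives; this is exactly why $\lambda^*\in(0,1)$ rather than merely $\lambda^*<\infty$ is assumed (cf.\ Remark~2.1).
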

\begin{rem}
 We remark that the assumption ~\ref{A3} is natural in the context of L\'{e}vy noise with the exception of $\lambda^*\in (0,1)$, which is necessary to handle the nonlocal nature of the It\^{o}-L\'{e}vy formula
 for the path-wise uniqueness in Subsection \ref{subsec:uniqueness}; see also \cite[Remark $1$]{BKM-2015}. Finally, the assumptions \ref{A1}-\ref{A4} collectively ensures existence of a weak solution of
 the problem \eqref{eq:p-laplace}. 
\end{rem}
\begin{rem}
 In view of \cite[Remark $1.1$]{zimmermann}, $\bar{\mathbb{P}}$-a.s., $\bar{u}\in C_w\big([0,T]; L^2(D)\big)$\footnote[3]{For any Banach space $\mathbb{X}$, $~~C_w([0, T ]; \mathbb{X})$ denotes the
Bochner space of weakly continuous functions with values in $\mathbb{X}$.} and  hence for all $t\in [0,T]$, $u(t,\cdot)$ is a stochastic process with values in $L^2(D)$.
\end{rem}
We denote by $\mathcal{U}_{\rm ad}^w(u_0;T)$ the set of weak admissible solutions to the problem \eqref{eq:p-laplace} in the sense Theorem \ref{thm:existence-weak}. Then, the initial value control problem 
\eqref{eq:control-problem} can be re-written as follows.
\begin{defi}\label{defi:optimal-control}
 Let the assumptions of Theorem \ref{thm:existence-weak} hold, and let $\Psi$ be a given Lipschitz continuous function on $L^2(D)$ and $u_{\rm tar} \in L^p(0,T; W_0^{1,p}(D))$ be a given deterministic target profile. A weak optimal solution of \eqref{eq:control-problem} is a $7$-tuple
 $\pi^*:=\big( \Omega^*, \mathcal{F}^*, \mathbb{P}^*,\{ \mathcal{F}_t^*\}, N^*, u^*, U^*\big)\in \mathcal{U}_{\rm ad}^w(u_0;T)$ such that
 \begin{align}
  \mathcal{J}(\pi^*)= \inf_{\pi \in \mathcal{U}_{\rm ad}^w(u_0; T)} \mathcal{J}(\pi):=\Lambda\,. \label{eq:optimal-control}
 \end{align}
\end{defi}
The associated control $U^*$ in $\pi^*$, as in \eqref{eq:optimal-control} is called weak optimal control of the control problem \eqref{eq:control-problem}.
\begin{thm}\label{thm:existence-optimal-control}
 There exists a weak optimal solution $\pi^*$ of \eqref{eq:control-problem} in the sense of Definition \ref{defi:optimal-control}.
\end{thm}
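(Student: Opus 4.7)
The plan is the direct method of the calculus of variations adapted to the stochastic compactness machinery used for Theorem~\ref{thm:existence-weak}. First I would take a minimizing sequence $\pi_n=(\Omega_n,\mathcal{F}_n,\mathbb{P}_n,\{\mathcal{F}_t^n\},N_n,u_n,U_n)\in \mathcal{U}_{\rm ad}^w(u_0;T)$ with $\mathcal{J}(\pi_n)\downarrow\Lambda$; the infimum $\Lambda$ is finite because Theorem~\ref{thm:existence-weak} provides at least one admissible $\pi$ (take, e.g., $U\equiv 0$) for which the estimate \eqref{esti:bound-weak-solun} combined with Lipschitz continuity of $\Psi$ bounds $\mathcal{J}$. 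From $\mathcal{J}(\pi_n)\le\Lambda+1$ I would deduce a uniform bound on $\mathbb{E}_n\|U_n\|_{W^{1,p}(D)}^p$, which fed back into \eqref{esti:bound-weak-solun} yields
\[
 \sup_n\Big\{\mathbb{E}_n\|U_n\|_{W^{1,p}(D)}^p+\mathbb{E}_n\Big[\sup_{0\le t\le T}\|u_n(t)\|_{L^2(D)}^2+\int_0^T\|u_n(t)\|_{W_0^{1,p}(D)}^p\,{\rm d}t\Big]\Big\}<\infty.
\]

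Second, I would establish tightness of the joint laws of $\{(u_n,U_n,N_n)\}$ in the same product topology used in Section~\ref{sec:existence-weak-solu}: for $u_n$, the intersection $L^p(0,T;L^2(D))\cap C_w([0,T];L^2(D))$ together with the weak topology of $L^p(0,T;W_0^{1,p}(D))$, relying on the bound above and the Aldous condition (Definition~\ref{defi:aldous-condition}) established for the approximate scheme and inherited by $u_n$; for $U_n$, reflexivity of $W^{1,p}(D)$ makes norm-bounded sets weakly sequentially compact, hence tight in the weak topology; for $N_n$, tightness in the space of Radon point measures is standard. Jakubowski's version of the Skorokhod theorem then produces a new stochastic basis $(\Omega^*,\mathcal{F}^*,\mathbb{P}^*,\{\mathcal{F}_t^*\})$ and a subsequence (not relabelled) converging $\mathbb{P}^*$-a.s.\ to a candidate $\pi^*=(\Omega^*,\mathcal{F}^*,\mathbb{P}^*,\{\mathcal{F}_t^*\},N^*,u^*,U^*)$. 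Preservation of laws under the Skorokhod lifting gives $\mathcal{L}(U_n^*)=\mathcal{L}(U_n)$ and hence $\mathcal{J}(\pi_n)=\mathcal{J}(\pi_n^*)$, making the comparison of costs on the new basis legitimate.

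Third, I would identify $\pi^*$ as an element of $\mathcal{U}_{\rm ad}^w(u_0;T)$. Because the control $U_n$ enters only linearly as an initial datum, the nonlinear limit passages on $|\nabla u_n|^{p-2}\nabla u_n$, on $\vec f(u_n)$, and on the stochastic integral $\int_0^{\cdot}\!\!\int_{|z|>0}\eta(u_n;z)\widetilde{N}_n({\rm d}z,{\rm d}s)$ are carried out by the monotonicity and Skorokhod identification arguments already developed for Theorem~\ref{thm:existence-weak}, and path-wise uniqueness (Subsection~\ref{subsec:uniqueness}) ensures $u^*$ is the unique weak solution corresponding to $U^*$.

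Finally, I would prove the lower semi-continuity $\mathcal{J}(\pi^*)\le \liminf_n \mathcal{J}(\pi_n^*)=\Lambda$. The tracking term $\int_0^T\|u-u_{\rm tar}\|_{L^2(D)}^2\,{\rm d}t$ passes to the limit by the strong $L^p(0,T;L^2(D))$ convergence built into the tightness space, combined with uniform integrability coming from the $L^\infty_t L^2_x$ bound. The control penalty is handled by convexity: $U\mapsto\|U\|_{W^{1,p}(D)}^p$ is convex and continuous on $W^{1,p}(D)$, hence weakly lower semi-continuous, and Fatou together with a.s.\ weak convergence $U_n^*\rightharpoonup U^*$ closes this term. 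The main obstacle is the terminal payoff $\Psi(u^*(T))$: the Skorokhod convergence delivers $u_n^*(T)\rightharpoonup u^*(T)$ only weakly in $L^2(D)$, and Lipschitz continuity of $\Psi$ does not by itself imply lower semi-continuity under weak convergence. I would overcome this by writing the path-wise It\^o--L\'evy energy identity for $u_n^*$ at $t=T$; passing to the limit yields $\limsup_n \mathbb{E}^*\|u_n^*(T)\|_{L^2(D)}^2\le \mathbb{E}^*\|u^*(T)\|_{L^2(D)}^2$, the reverse inequality being automatic from weak convergence. The Radon--Riesz property of $L^2(D)$ then upgrades the weak convergence at $t=T$ to strong convergence (along a further subsequence, $\mathbb{P}^*$-a.s.), after which the Lipschitz bound on $\Psi$ and dominated convergence conclude $\mathbb{E}^*\Psi(u^*(T))\le \liminf_n \mathbb{E}^*\Psi(u_n^*(T))$, and hence $\mathcal{J}(\pi^*)=\Lambda$.
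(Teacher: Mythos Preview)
Your proposal is correct and follows the same four-step variational scheme as the paper: a minimizing sequence with uniform bounds from \eqref{esti:bound-weak-solun}, tightness on $\mathcal{X}$ and Jakubowski--Skorokhod, identification of the limit via the monotonicity machinery of Section~\ref{sec:existence-weak-solu}, and lower semi-continuity of $\mathcal{J}$. The one substantive place you go beyond the paper is the terminal cost $\mathbb{E}[\Psi(u(T))]$: the paper simply folds this term into the Fatou step in its Step~IV without further comment, whereas you correctly flag that the Skorokhod topology on $\mathcal{Z}$ delivers only $u_n^*(T)\rightharpoonup u^*(T)$ weakly in $L^2(D)$, under which a merely Lipschitz $\Psi$ is not automatically lower semi-continuous. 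Your remedy---comparing the exact It\^o--L\'evy energy identity for $u_n^*$ and $u^*$ and using weak lower semi-continuity of $\int_{D_T}|\nabla\cdot|^p$ to force $\limsup_n\mathbb{E}^*\|u_n^*(T)\|_{L^2}^2\le\mathbb{E}^*\|u^*(T)\|_{L^2}^2$, hence strong $L^2$-convergence at $t=T$ via Radon--Riesz---works precisely because here the $u_n^*$ are \emph{exact} solutions (so the energy \emph{equality} holds), in contrast to the discrete approximants of Section~\ref{sec:existence-weak-solu} where only an inequality is available; this is a genuine clarification of a point the paper leaves implicit. One minor remark: your appeal to path-wise uniqueness in the third step is not needed for admissibility of $\pi^*$---membership in $\mathcal{U}_{\rm ad}^w(u_0;T)$ requires only that $u^*$ solve \eqref{eq:p-laplace}, which the limit identification already provides.
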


\section{Wellposedness of weak solution}\label{sec:existence-weak-solu}
In this section, we establish wellposedness of a weak solution for \eqref{eq:p-laplace}. To do this, we first construct an approximate solution
via implicit time discretization scheme, and then derive necessary uniform bounds.
\subsection{Implicit Euler scheme}
For $N\in \mathbb{N}^*$, let $0=t_0< t_1< \cdots < t_N=T$ be a uniform partition of $[0,T]$ with mesh size $\Delta t:=\frac{T}{N}>0$ i.e., $t_k=k\Delta t$ for $0\le k\le N$.
\vspace{.1cm}

For any $u_0\in L^2(D)$, there exists a sequence $\{ u_{0,\Delta t}\}\in W_0^{1,p}(D)$ such that $u_{0,\Delta t}\goto u_0$ in $L^2(D)$ as $\Delta t \goto 0$. Moreover, the following estimate holds:
\begin{align}
 \frac{1}{2}\|u_{0,\Delta t}\|_{L^2(D)}^2 + \Delta t\, \|\nabla u_{0,\Delta t}\|_{L^p(D)}^p \le \frac{1}{2}\|u_0\|_{L^2(D)}^2\,. \label{esti:initial-approx}
\end{align}
For its proof, we refer to see \cite[Lemma $30$]{zimmermann}. Set $\hat{u}_0= u_{0,\Delta t} + U$. With this $\hat{u}_0$, we introduce the following time discretization:
  \begin{align}
   \hat{u}_{k+1}-\hat{u}_k -\Delta t\, \mbox{div}_x  \big(|\nabla \hat{u}_{k+1}|^{p-2}\nabla \hat{u}_{k+1} + \vec{f}(\hat{u}_{k+1})\big)
   = \int_{|z|>0}\int_{t_k}^{t_{k+1}}\eta(\hat{u}_k;z) \widetilde{N}({\rm d}z,{\rm d}t)\,.\label{eq:discrete-p-laplace}
  \end{align}
  
  \begin{prop}
  Let $\Delta t>0$ be small and $\hat{u}_0$ is defined as above. Then, for any $k=0,1,2,\cdots, N-1$, there exists a unique $\mathcal{F}_{t_{k+1}}$-measurable $W_0^{1,p}(D)$-valued random variable
 $\hat{u}_{k+1}$ such that for any $v\in W_0^{1,p}(D)$, the following variational formula holds
  \begin{align}
    &\int_{D} \Big((\hat{u}_{k+1}-\hat{u}_k)v + \Delta t \big\{ |\nabla \hat{u}_{k+1}|^{p-2}\nabla \hat{u}_{k+1} + \vec{f}(\hat{u}_{k+1})\big\}\cdot \grad v \Big)\,{\rm d}x \notag \\
    &= \int_{D} \int_{t_k}^{t_{k+1}} \int_{|z|>0} \eta(\hat{u}_k;z)\,v\, \widetilde{N}({\rm d}z,{\rm d}s)\,{\rm d}x. \label{variational_formula_discrete}
   \end{align}
   \textit{i.e.,} $\mathbb{P}$-a.s., $\hat{u}_{k+1}$ is a unique weak solution to the problem \eqref{eq:discrete-p-laplace}.
  \end{prop}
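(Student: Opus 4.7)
The plan is to solve \eqref{variational_formula_discrete} pathwise in $\omega$ as a stationary nonlinear elliptic PDE of $p$-Laplace type driven by the random data
\begin{equation*}
F_k(\omega) := \hat{u}_k(\omega) + \int_{|z|>0}\int_{t_k}^{t_{k+1}}\eta(\hat{u}_k(\omega);z)\,\widetilde{N}({\rm d}z,{\rm d}s).
\end{equation*}
By induction on $k$ and the properties of the compensated Poisson stochastic integral, $F_k$ is an $\mathcal{F}_{t_{k+1}}$-measurable $L^2(D)$-valued random variable. For fixed $\omega$, \eqref{variational_formula_discrete} reduces to $\mathcal{A}(u)=F_k(\omega)$ in $W^{-1,p'}(D)$, where
\begin{equation*}
\langle\mathcal{A}(u),v\rangle := \int_D uv\,{\rm d}x + \Delta t\int_D\bigl(|\nabla u|^{p-2}\nabla u+\vec{f}(u)\bigr)\cdot\nabla v\,{\rm d}x,\quad u,v\in W_0^{1,p}(D).
\end{equation*}

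For existence I would verify that $\mathcal{A}:W_0^{1,p}(D)\to W^{-1,p'}(D)$ is bounded, coercive and pseudomonotone, and then invoke the Brezis surjectivity theorem (equivalently, Leray--Lions). Boundedness is immediate from the $p$-Laplace growth and the Lipschitz bound $|\vec{f}(s)|\le L|s|$ from \ref{A2}. Coercivity is clean: introducing the vector primitive $\vec{F}=(F_i)$ with $F_i(s):=\int_0^s f_i(\tau)\,{\rm d}\tau$, one has $\vec{f}(u)\cdot\nabla u=\mathrm{div}_x\vec{F}(u)$, so the boundary condition $u|_{\partial D}=0$ forces $\int_D\vec{f}(u)\cdot\nabla u\,{\rm d}x=0$, and hence $\langle\mathcal{A}(u),u\rangle=\|u\|_{L^2(D)}^2+\Delta t\|\nabla u\|_{L^p(D)}^p$. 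For pseudomonotonicity: the principal part $I+\Delta t(-\Delta_p)$ is classically monotone, bounded and hemicontinuous, while the convective perturbation $u\mapsto-\Delta t\,\mathrm{div}_x\vec{f}(u)$ is sequentially weakly--strongly continuous from $W_0^{1,p}(D)$ into $W^{-1,p'}(D)$, thanks to the compact embedding $W_0^{1,p}(D)\hookrightarrow\hookrightarrow L^p(D)$ combined with Lipschitz continuity of $\vec{f}$. The sum of a monotone and a weakly continuous operator is pseudomonotone, so Brezis' theorem produces $\hat{u}_{k+1}(\omega)\in W_0^{1,p}(D)$.

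For uniqueness I would take two candidate solutions $u_1,u_2$, set $w:=u_1-u_2$ and test the subtracted variational identity against $w$. The identity part yields $\|w\|_{L^2(D)}^2$, the $p$-Laplace part is non-negative and bounded below by $c_p\|\nabla w\|_{L^p(D)}^p$ for $p\ge 2$, and the convective remainder is controlled by $\Delta tL\|w\|_{L^2(D)}\|\nabla w\|_{L^2(D)}$ through \ref{A2}. H\"older on the bounded $D$ dominates $\|\nabla w\|_{L^2(D)}$ by a multiple of $\|\nabla w\|_{L^p(D)}$, and a Young's inequality with parameter tuned to $\Delta t$ then absorbs the lower-order term into $\tfrac12\|w\|_{L^2(D)}^2$ plus a fraction of the $p$-Laplace contribution, provided $\Delta t$ is small, which is the standing hypothesis; this forces $w=0$. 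Measurability of $\omega\mapsto\hat{u}_{k+1}(\omega)$ then follows because, by uniqueness and a standard monotone-continuity argument, the solution map $F_k\mapsto\hat{u}_{k+1}$ is continuous from $L^2(D)$ into $W_0^{1,p}(D)$, while $F_k$ is $\mathcal{F}_{t_{k+1}}$-measurable.

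The main obstacle is the non-monotone convective term $\mathrm{div}_x\vec{f}(u)$. For existence it is overcome by the compactness of the Sobolev embedding, which turns this term into a weakly continuous (hence pseudomonotone) perturbation of the monotone principal part; for uniqueness the smallness of $\Delta t$ is essential, as it is what allows the identity plus $p$-Laplace monotonicity to dominate the Lipschitz flux contribution in the Young-type absorption. With these two points settled, the rest of the argument is routine.
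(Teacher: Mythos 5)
Your overall architecture coincides with the paper's: the same operator $\mathcal{A}u = u - \Delta t\,\mathrm{div}_x(|\nabla u|^{p-2}\nabla u + \vec{f}(u))$, surjectivity via coercivity and pseudomonotonicity (Brezis' theorem, with the perturbation $\mathrm{div}_x\vec{f}(u)$ handled by the compact Sobolev embedding), the right-hand side $F_k = \hat{u}_k + \int_{t_k}^{t_{k+1}}\int_{|z|>0}\eta(\hat{u}_k;z)\widetilde{N}({\rm d}z,{\rm d}s)$ shown to be an $\mathcal{F}_{t_{k+1}}$-measurable $L^2(D)$-valued variable by the It\^{o}--L\'{e}vy isometry, and measurability of $\hat{u}_{k+1}$ via continuity of $\mathcal{A}^{-1}$ and induction. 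The paper delegates injectivity and continuity of $\mathcal{A}^{-1}$ to \cite[Lemma 1]{zimmermann}; you attempt to prove injectivity directly, and that is where there is a genuine gap.

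Your uniqueness argument does not close. Testing the difference with $w=u_1-u_2$ gives
$\|w\|_{L^2(D)}^2 + c_p\,\Delta t\,\|\nabla w\|_{L^p(D)}^p \le \Delta t\,L\,C\,\|w\|_{L^2(D)}\|\nabla w\|_{L^p(D)}$,
and no choice of Young exponents absorbs the right-hand side: with the conjugate pair $(2,2)$ you produce $\|\nabla w\|_{L^p(D)}^2$, which for $p>2$ is \emph{not} dominated by $\Delta t\,c_p\|\nabla w\|_{L^p(D)}^p$ in the regime $\|\nabla w\|_{L^p(D)}\to 0$ --- precisely the regime you must rule out --- no matter how small $\Delta t$ is; with the pair $(p',p)$ you produce $\|w\|_{L^2(D)}^{p'}$ with $p'<2$, which likewise cannot be absorbed into $\|w\|_{L^2(D)}^2$ near $w=0$. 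The best this energy estimate yields is an inequality of the form $\|w\|_{L^2(D)}^{2-p'}\le C\Delta t$, i.e.\ a smallness bound on $w$, not $w=0$. So the ``smallness of $\Delta t$'' you invoke is not the mechanism that rescues the argument. A correct route is an $L^1$-contraction: test the subtracted identity with $\beta_\vartheta'(w)$, where $\beta_\vartheta$ is the convex approximation of $|\cdot|$ used in Subsection \ref{subsec:uniqueness} of the paper. The identity part converges to $\int_D|w|\,{\rm d}x$, the $p$-Laplace part is nonnegative since $\beta_\vartheta''\ge 0$ and the $p$-Laplacian is monotone, and the convective part is bounded by $L M_2\int_D \mathbf{1}_{\{0<|w|\le\vartheta\}}|\nabla w|\,{\rm d}x \to 0$ as $\vartheta\to 0$ by dominated convergence; this gives $\int_D|w|\,{\rm d}x\le 0$, hence $w=0$, with no smallness condition on $\Delta t$. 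The remaining assertion that the solution map is continuous (needed for measurability) is stated by you, as by the paper, without proof; that is a delegation rather than a gap, but it deserves at least the same care, since continuity of $\mathcal{A}^{-1}$ is again not a consequence of the naive $L^2$ energy estimate for the same exponent reasons.
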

  \begin{proof}  Let $\Delta t >0$ be a fixed small number. Define an operator $\mathcal{A}: W_0^{1,p}(D)\goto W^{-1,p^\prime}(D)$ via
   \begin{align*}
  \big\langle \mathcal{A}u, v \big\rangle := \int_{D}\Big(uv + \Delta t\big\{ |\nabla u|^{p-2}\nabla u + \vec{f}(u)\big\}\cdot \grad v \Big)\,{\rm d}x\,,
  \quad  \forall\,u,v \in W_0^{1,p}(D)\,.
  \end{align*}
  Then, $\mathcal{A}$ is a coercive pseudo-monotone operator and hence by Brezis' theorem $\mathcal{A}$ is onto $W^{-1,p^\prime}(D)$, see \cite[Theorem $2.6$]{Roubicek}. Arguing similarly as in the proof of 
  \cite[Lemma $1$]{zimmermann}, we infer that $\mathcal{A}$ is injective and $\mathcal{A}^{-1}:W^{-1,p^\prime}(D) \goto W_0^{1,p}(D)$ is 
continuous.
\vspace{.1cm}

Let $ \displaystyle X_k:= \hat{u}_k + \int_{|z|>0}\int_{t_k}^{t_{k+1}}\eta(\hat{u}_k;z) \widetilde{N}({\rm d}z,{\rm d}t)$. Then, thanks to the assumption \ref{A3} and It\^{o}-L\'{e}vy isometry, we have 
\begin{align*}
 \mathbb{E}\big[ \|X_k\|_{L^2(D)}^2\big] & \le 2 \mathbb{E}\big[\|\hat{u}_k\|_{L^2(D)}^2\big] + 2\lambda^* \Delta t\,  \mathbb{E}\Big[\int_{|z|>0} \|\hat{u}_{k}\|_{L^2(D)}^2 (1\wedge |z|^2)\,m({\rm d}z)
 \Big] \\
 & \le C(\Delta t, \lambda, c_\eta)\mathbb{E}\big[\|\hat{u}_k\|_{L^2(D)}^2\big].
\end{align*}
Therefore for a.s. $\omega \in \Omega$, $X_k \in L^2(D)$, and hence $\hat{u}_{k+1}= \mathcal{A}^{-1}X_k$. Note that $\hat{u}_{k+1}$ is $\mathcal{F}_{t_{k+1}}$-measurable 
if we assume that $\hat{u}_k$ is $\mathcal{F}_{t_{k}}$-measurable. Thus the assertion follows by induction. This completes the proof.
\end{proof}
\subsection{\textbf{\textit{A-priori} estimate}}

We choose a test function $v= \hat{u}_{k+1}$ in \eqref{variational_formula_discrete}, and use Young's inequality, \ref{A3}-\ref{A4}, It\^{o}-L\'{e}vy isometry, and the identity 
$ (a-b)a= \frac{1}{2}\big[a^2 + (a-b)^2 -b^2\big]$ for all $a,b \in \R$ to have, after taking the expectation and recalling $ \displaystyle \int_{D} \vec{f}(v)\cdot \grad v\,dx=0$ for any $v\in W_0^{1,p}(D)$, 
\begin{align*}
 &\frac{1}{2} \Big\{ \mathbb{E}\big[\|\hat{u}_{k+1}\|_{L^2(D)}^2\big]+ \mathbb{E}\big[\|\hat{u}_{k+1}-\hat{u}_k||_{L^2(D)}^2 \big]- \mathbb{E}\big[\|\hat{u}_k\|_{L^2(D)}^2\big] \Big\} +
 \Delta t\, \mathbb{E}\Big[\|\grad \hat{u}_{k+1}\|_{L^p(D)}^p\Big] \notag \\
  & \le  \frac{1}{4}  \mathbb{E}\Big[\|\hat{u}_{k+1}-\hat{u}_k\|_{L^2(D)}^2\Big] + C\,\Delta t\, \mathbb{E}\big[\|\hat{u}_k\|_{L^2(D)}^2\big]. 
\end{align*}
An application of discrete Gronwall's lemma then implies 
\begin{align}
 \sup_{0\le n\le N}\mathbb{E}\Big[\|\hat{u}_{n}\|_{L^2(D)}^2\Big] +  \sum_{k=0}^{N-1} \mathbb{E}\Big[\|\hat{u}_{k+1}-\hat{u}_k\|_{L^2(D)}^2\Big] +
  \Delta t \sum_{k=0}^{N-1} \mathbb{E}\Big[\|\grad \hat{u}_{k+1}\|_{L^p(D)}^p\Big]  \le  C\,. \label{a-prioriestimate:1}
\end{align}
Moreover, we can easily show that $ \displaystyle \mathbb{E}\Big[\sup_{0\le n\le N}\|\hat{u}_{n}\|_{L^2(D)}^2\Big] \le C$. 
\vspace{.1cm}

We would like to define certain processes defined on the whole time interval $[0,T]$ in terms of the discrete solutions $\{ \hat{u}_k\}$, and derive a-priori estimate. Like in \cite{zimmermann}, we
introduce the right-continuous step function $u_{\Delta t}(t)$, left-continuous $\{\mathcal{F}_t\}$-adapted step function $\bar{u}_{\Delta t}(t)$, square-integrable $\{\mathcal{F}_t\}$-martingale 
$ B_{\Delta t}(t)$ and the piecewise affine functions $\tilde{u}_{\Delta t}(t)$ and $\tilde{B}_{\Delta t}(t)$ as
\begin{align*}
 u_{\Delta t}(t): &= \sum_{k=0}^{N-1} \hat{u}_{k+1} {\bf 1}_{[t_k,t_{k+1})}(t)\,; \quad 
 \bar{u}_{\Delta t}(t)= \sum_{k=0}^{N-1} \hat{u}_{k} {\bf 1}_{(t_k,t_{k+1}]}(t)~~\text{with}~~
 \bar{u}_{\Delta t}(0)=\hat{u}_0\,, \\
 B_{\Delta t}(t):&= \int_0^t \int_{|z|>0} \eta(u_{\Delta t}(s);z)\widetilde{N}({\rm d}z,{\rm d}s)\,,
\end{align*}
and 
\begin{equation}\label{eq:defi-piecewise-affine-function}
 \begin{aligned}
  \tilde{u}_{\Delta t}(t) & := \sum_{k=0}^{N-1}\Big( \frac{\hat{u}_{k+1}-\hat{u}_k}{\Delta t}(t-t_k) + \hat{u}_k\Big) {\bf 1}_{[t_k,t_{k+1})}(t)~~\text{with}~~ \tilde{u}_{\Delta t}(T)= \hat{u}_N\,, \\
   \tilde{B}_{\Delta t}(t) & := \sum_{k=0}^{N-1}\Big( \frac{B_{\Delta t}(t_{k+1})-B_{\Delta t}(t_k)}{\Delta t}(t-t_k)
   + B_{\Delta t}(t_k)\Big) {\bf 1}_{[t_k,t_{k+1})}(t)\,.
 \end{aligned}
\end{equation}
A straightforward calculation shows that 
\begin{align*}
\begin{cases}
 \underset{t\in [0,T]}\sup\, \mathbb{E}\big[\|u_{\Delta t}(t)\|_{L^2(D)}^2\big] = \underset{0\le k\le N-1}\max\, \mathbb{E}\big[\|\hat{u}_{k+1}\|_{L^2(D)}^2\big]\,, \\
  \mathbb{E}\big[ \underset{t\in [0,T]}\sup\,\|u_{\Delta t}(t)\|_{L^2(D)}^2\big] \le \mathbb{E}\big[ \underset{0\le k\le N-1}\max\, \|\hat{u}_{k+1}\|_{L^2(D)}^2\big]\,, \\
 \mathbb{E}\big[\|u_{\Delta t}-\tilde{u}_{\Delta t}\|_{L^2(D_T)}^2\big] \le \displaystyle \Delta t \sum_{k=0}^{N-1} \mathbb{E}\big[\|\hat{u}_{k+1}-\hat{u}_k\|_{L^2(D)}^2\big]\,.
  \end{cases}
\end{align*}
In view of the above definitions and \textit{a-priori} estimate \eqref{a-prioriestimate:1}, we arrive at the following lemma.
\begin{lem}
 There exists a constant $C>0$, independent of $\Delta t$, such that 
 \begin{equation}\label{esti:a-priori-2}
  \begin{aligned}
  & \sup_{t\in [0,T]} \mathbb{E}\Big[\|u_{\Delta t}(t)\|_{L^2(D)}^2\Big]= \sup_{t\in [0,T]} \mathbb{E}\Big[\|\tilde{u}_{\Delta t}(t)\|_{L^2(D)}^2\Big] \le C\,, \\
   &  \mathbb{E}\Big[ \sup_{t\in [0,T]}\|u_{\Delta t}(t)\|_{L^2(D)}^2\Big]=  \mathbb{E}\Big[\sup_{t\in [0,T]}\|\tilde{u}_{\Delta t}(t)\|_{L^2(D)}^2\Big] \le C \,,\\
  & \mathbb{E}\Big[\int_0^T \int_{D} |\grad u_{\Delta t}(t)|^p\,{\rm d}x\,{\rm d}t\Big] \le C; \quad  \mathbb{E}\Big[\int_0^T \int_{D} |\grad \tilde{u}_{\Delta t}(t)|^p\,{\rm d}x\,{\rm d}t\Big] \le C \,,\\
  &\mathbb{E}\Big[\|u_{\Delta t}-\tilde{u}_{\Delta t}\|_{L^2(D_T)}^2\Big] \le C\,\Delta t\,.
  \end{aligned}
 \end{equation}
\end{lem}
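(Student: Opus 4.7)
The plan is to derive all four lines of \eqref{esti:a-priori-2} directly from the discrete a-priori estimate \eqref{a-prioriestimate:1}, exploiting that $u_{\Delta t}$ is the right-continuous step interpolant of the nodal values $\{\hat u_k\}$ and $\tilde u_{\Delta t}$ is their piecewise-affine interpolant. Three of the four bounds amount to bookkeeping on the definitions; the genuine work lies in the pathwise supremum bound, equivalently $\mathbb{E}[\max_{0\le n\le N}\|\hat u_n\|_{L^2(D)}^2]\le C$, announced just after \eqref{a-prioriestimate:1} as ``easily shown'' but not carried out there.

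For the suprema of $\mathbb{E}\|\cdot\|_{L^2}^2$ in $t$ and the $L^p(0,T;W_0^{1,p})$-bounds, I write $\tilde u_{\Delta t}(t)=(1-\theta_k)\hat u_k+\theta_k\hat u_{k+1}$ on $[t_k,t_{k+1})$ with $\theta_k:=(t-t_k)/\Delta t\in[0,1]$. For $u_{\Delta t}$ the claim is immediate since it is piecewise constant; for $\tilde u_{\Delta t}$ the convexity of $v\mapsto\|v\|_{L^2}^2$ and the pointwise bound $|(1-\theta)\xi+\theta\eta|^p\le 2^{p-1}(|\xi|^p+|\eta|^p)$ reduce both bounds to their discrete counterparts in \eqref{a-prioriestimate:1} after summing in $k$ and integrating in $t$. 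The fourth line follows from the identity $u_{\Delta t}(t)-\tilde u_{\Delta t}(t)=(1-\theta_k)(\hat u_{k+1}-\hat u_k)$ on $[t_k,t_{k+1})$, which gives
\begin{align*}
\mathbb{E}\big[\|u_{\Delta t}-\tilde u_{\Delta t}\|_{L^2(D_T)}^2\big]=\frac{\Delta t}{3}\sum_{k=0}^{N-1}\mathbb{E}\big[\|\hat u_{k+1}-\hat u_k\|_{L^2(D)}^2\big]\le C\,\Delta t
\end{align*}
by the middle term of \eqref{a-prioriestimate:1}.

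For the pathwise supremum, I revisit the argument that produced \eqref{a-prioriestimate:1} without yet taking expectation. Testing \eqref{variational_formula_discrete} with $v=\hat u_{j+1}$, using $(a-b)a=\tfrac12(a^2+(a-b)^2-b^2)$ and $\int_D\vec f(\hat u_{j+1})\cdot\grad\hat u_{j+1}\,{\rm d}x=0$, then splitting the stochastic term into an $\mathcal F_{t_j}$-martingale increment tested against $\hat u_j$ and a remainder tested against $\hat u_{j+1}-\hat u_j$ which Young's inequality absorbs into $\tfrac14\|\hat u_{j+1}-\hat u_j\|_{L^2}^2$, and summing from $j=0$ to $n-1$, I obtain
\begin{align*}
\tfrac12\|\hat u_n\|_{L^2}^2+\Delta t\sum_{j=0}^{n-1}\|\grad\hat u_{j+1}\|_{L^p}^p\le\tfrac12\|\hat u_0\|_{L^2}^2+\sum_{j=0}^{n-1}M_j+C\sum_{j=0}^{n-1}Q_j,
\end{align*}
where $M_j:=\bigl(\int_{t_j}^{t_{j+1}}\!\int_{|z|>0}\eta(\hat u_j;z)\widetilde N({\rm d}z,{\rm d}s),\hat u_j\bigr)_{L^2(D)}$ and $Q_j:=\|\int_{t_j}^{t_{j+1}}\!\int_{|z|>0}\eta(\hat u_j;z)\widetilde N({\rm d}z,{\rm d}s)\|_{L^2(D)}^2$. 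Taking $\max_{0\le n\le N}$ and then expectation, the Burkholder--Davis--Gundy inequality applied to the discrete martingale $n\mapsto\sum_{j<n}M_j$, combined with \ref{A3}--\ref{A4} and Cauchy--Schwarz in $L^2(D)$, controls the martingale part by $C\sqrt{\Delta t}\,\mathbb{E}\bigl[\max_j\|\hat u_j\|_{L^2}\bigl(\sum_j\|\hat u_j\|_{L^2}^2\bigr)^{1/2}\bigr]$; a Young split then yields an absorbable $\epsilon\,\mathbb{E}[\max_j\|\hat u_j\|_{L^2}^2]$ plus $C_\epsilon\Delta t\sum_j\mathbb{E}\|\hat u_j\|_{L^2}^2\le C$, the last inequality using the sup-outside-expectation bound of \eqref{a-prioriestimate:1}. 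The It\^o--L\'evy isometry handles $\mathbb{E}\sum_jQ_j$ analogously. A discrete Gronwall argument then delivers $\mathbb{E}[\max_n\|\hat u_n\|_{L^2}^2]\le C$, and the statement for $\tilde u_{\Delta t}$ follows at once by the convex-combination representation.

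I expect the BDG step to be the main obstacle: one must verify carefully that $\{\sum_{j<n}M_j\}_n$ is a genuine square-integrable $\{\mathcal F_{t_n}\}$-martingale (using $\mathcal F_{t_j}$-measurability of $\hat u_j$ together with the independence of the driving Poisson increment on $(t_j,t_{j+1}]$), so that the jump form of BDG applies and its quadratic variation can be unwound via \ref{A3}--\ref{A4} and the already available sup-outside-expectation bound. Everything else reduces to elementary manipulation of the step/affine interpolants.
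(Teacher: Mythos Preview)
Your proposal is correct and follows the same route as the paper: reduce each line of \eqref{esti:a-priori-2} to the discrete estimate \eqref{a-prioriestimate:1} via the piecewise-constant/affine structure of $u_{\Delta t}$ and $\tilde u_{\Delta t}$. The paper simply invokes the definitions and \eqref{a-prioriestimate:1} without writing out the details; in particular your BDG argument for $\mathbb{E}\bigl[\max_{0\le n\le N}\|\hat u_n\|_{L^2(D)}^2\bigr]\le C$ supplies precisely what the paper asserts as ``easily shown'' just before the lemma.
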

 Thanks to \eqref{esti:initial-approx} and \eqref{esti:a-priori-2}, one can easily show the following estimate:
\begin{align}
 \mathbb{E}\Big[ \big\|\tilde{u}_{\Delta t}\big\|_{L^p(0,T; W_0^{1,p}(D))}^p \Big]
 &\le C \mathbb{E}\Big[ \int_0^T \|\grad u_{\Delta t}(t)\|_{L^p(D)}^p\,{\rm d}t + \Delta t\,\|\grad \hat{u}_0\|_{L^p(D)}^p\Big] \notag \\
 & \le C\,\Big(\|u_0\|_{L^2(D)}^2 + \mathbb{E}\big[\|U\|_{W^{1,p}(D)}^p\big]\Big)\,,\label{esti:tilde-lp-w1p}
\end{align}
for some constant $C>0$, independent of $\Delta t$.

\subsection{Tightness of the sequence $\mathcal{L}(\tilde{u}_{\Delta t})$} In this subsection, we will show that the laws of the sequence $\tilde{u}_{\Delta t}$, denoted by 
$\mathcal{L}(\tilde{u}_{\Delta t})$, is tight on some appropriate functional space. To do so, analogous to those considered in \cite{brzezniakmotyl2013, Metivier1988, Viot1988}, we define
\begin{align*}
 \mathcal{Z}:= \mathbb{D}([0,T]; W^{-1,p^\prime}(D))\cap \mathbb{D}([0,T]; L^2_w(D))\cap L^2_w(0,T; L^2(D)) \cap L^2(0,T; L^2(D))
\end{align*}
equipped with the topology $\mathcal{T}$, the supremum of the corresponding topologies, where the functional spaces 
$\mathbb{D}([0,T]; W^{-1,p^\prime}(D)),\,L^2_w(0,T; L^2(D))$, and $\mathbb{D}([0,T]; L^2_w(D))$ endowed with the respective topologies are defined as
\begin{itemize}
 \item[1).] $\mathbb{D}([0,T]; W^{-1,p^\prime}(D)):=$ the space of c\`{a}dl\`{a}g functions $u:[0,T]\goto W^{-1,p^\prime}(D)$ with the extended Skorokhod topology\footnote[4]{
 For the Skorokhod topology, we refer to see \cite{Billingsley,Skorokhod} and references therein.}.
 \item [2).] $L^2_w(0,T; L^2(D)):=$ the space $L^2(0,T; L^2(D))$ with the weak topology.
 \item[3).] $\mathbb{D}([0,T]; L^2_w(D)):=$ the space of all weakly c\`{a}dl\`{a}g functions $u:[0,T]\goto L^2(D)$ with the weakest topology
 such that for all 
 $h\in L^2(D)$, the mapping $\mathbb{D}([0,T]; L^2_w(D))\ni u\mapsto \int_{D}u(\cdot)h\,{\rm d}x \in \mathbb{D}([0,T]; \R)$ are continuous.
\end{itemize}
\begin{defi}\label{defi:aldous-condition}
(Aldous condition) Let $(X_n)_{n\in \mathbb{N}}$ be a sequence of c\`{a}dl\`{a}g, $\{\mathcal{F}_t\}$-adapted stochastic processes in a Banach space $\mathbb{U}$. We say that $(X_n)_{n\in \mathbb{N}}$
satisfies the Aldous condition if for every $\eps>0$ and $\gamma>0$, there is $\delta>0$ such that for every sequence $(\tau_n)_{n\in \mathbb{N}}$ of $\{\mathcal{F}_t\}$-stopping times with 
$\tau_n\le T$, one has 
\begin{align*}
 \sup_{n\in \mathbb{N}}\sup_{0<\theta\le \delta} \mathbb{P}\big\{\|X_n(\tau_n + \theta)-X_n(\tau_n)\|_{\mathbb{U}}\ge \gamma\big\} \le \eps\,.
\end{align*}
\end{defi}
The following lemma ensures the Aldous condition in a separable Banach space $\mathbb{U}$ for the sequence  $(X_n)_{n\in \mathbb{N}}$; cf.~\cite[Lemma 9]{motyl2013}.
\begin{lem}\label{lem:Aldous-condition}
 Let $(\mathbb{U},\|\cdot\|_{\mathbb{U}})$ be a separable Banach space and let $(X_n)_{n\in \mathbb{N}}$ be a sequence of $\mathbb{U}$-valued random variables. Assume that for every sequence
 $(\tau_n)$ of $\{\mathcal{F}_t\}$-stopping times with $\tau_n\le T$ and $\theta\ge 0$, the following condition holds
\begin{align}
 \mathbb{E}\Big[\|X_n(\tau_n + \theta)-X_n(\tau_n)\|^\alpha_{\mathbb{U}}\Big] \le C \theta^\zeta\,, \label{inq:Aldous-condition}
\end{align}
for some $\alpha,\, \zeta >0$ and some constant $C>0$. Then the sequence $(X_n)_{n\in \mathbb{N}}$ satisfies the Aldous condition.
\end{lem}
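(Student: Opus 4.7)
The plan is to reduce the Aldous condition, which is a statement about probabilities, to the given $\alpha$-th moment bound via a direct application of the Markov (Chebyshev) inequality. This is the standard technique: once one controls the $L^\alpha$-norm of the increment $X_n(\tau_n+\theta)-X_n(\tau_n)$ uniformly in $n$ and $\tau_n$, tail probabilities in $\mathbb{U}$ are automatically small for small $\theta$.

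More concretely, fix $\varepsilon>0$ and $\gamma>0$. For any sequence $(\tau_n)$ of $\{\mathcal{F}_t\}$-stopping times with $\tau_n\le T$ and any $\theta>0$, Markov's inequality applied to the nonnegative random variable $\|X_n(\tau_n+\theta)-X_n(\tau_n)\|_{\mathbb{U}}^\alpha$ gives
\begin{align*}
\mathbb{P}\bigl\{\|X_n(\tau_n+\theta)-X_n(\tau_n)\|_{\mathbb{U}}\ge \gamma\bigr\}
&= \mathbb{P}\bigl\{\|X_n(\tau_n+\theta)-X_n(\tau_n)\|_{\mathbb{U}}^{\alpha}\ge \gamma^{\alpha}\bigr\} \\
&\le \frac{1}{\gamma^{\alpha}}\,\mathbb{E}\bigl[\|X_n(\tau_n+\theta)-X_n(\tau_n)\|_{\mathbb{U}}^{\alpha}\bigr].
\end{align*}
By the hypothesis \eqref{inq:Aldous-condition}, the right-hand side is bounded by $C\gamma^{-\alpha}\theta^{\zeta}$, uniformly in $n$ and in the choice of stopping times.

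It then suffices to choose $\delta>0$ so small that $C\gamma^{-\alpha}\delta^{\zeta}\le \varepsilon$, i.e. $\delta:=\bigl(\varepsilon\gamma^{\alpha}/C\bigr)^{1/\zeta}$. For every $0<\theta\le \delta$ and every $n\in\mathbb{N}$, the above estimate yields
\[
\mathbb{P}\bigl\{\|X_n(\tau_n+\theta)-X_n(\tau_n)\|_{\mathbb{U}}\ge \gamma\bigr\}\le \frac{C\theta^{\zeta}}{\gamma^{\alpha}}\le \varepsilon,
\]
which is exactly the Aldous condition in Definition \ref{defi:aldous-condition}. No obstacle is really expected here: separability of $\mathbb{U}$ is used only implicitly to make sure the norm is measurable so that the probabilities and expectations above are well defined, and the strong admissibility (stopping time) structure plays no role beyond ensuring that \eqref{inq:Aldous-condition} can be invoked. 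The entire argument is one line of Markov's inequality followed by an explicit choice of $\delta$.
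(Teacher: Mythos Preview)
Your argument is correct and is exactly the standard one: the paper itself does not supply a proof but merely cites \cite[Lemma~9]{motyl2013}, whose proof is precisely the Chebyshev/Markov inequality computation you wrote down. There is nothing to add.
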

In view of \cite[Lemma $2.5$]{rozovskii}, \cite[Theorem $2$]{motyl2013}, see also \cite[Lemma $3.3$]{brzezniakmotyl2013} and \cite[Lemma $7$]{motyl2013}, we arrive at the following useful 
theorem regarding the criterion for the tightness in $\mathcal{Z}$. For its proof, consult \cite[Corollary $1$]{motyl2013}.
\begin{thm}\label{thm:for-tightness}
 Let $(u_{\Delta t})_{\Delta t >0}$ be a sequence of  c\`{a}dl\`{a}g, $W^{-1,p^\prime}(D)$-valued stochastic processes such that 
 \begin{itemize}
  \item[i)] there exists a constant $C_1>0$ such that 
  \begin{align*}
   \sup_{\Delta t >0} \mathbb{E}\Big[ \sup_{t\in [0,T]}\|u_{\Delta t}(t)\|_{L^2(D)}\Big] \le C_1\,,
  \end{align*}
  \item[ii)] there exists a constant $C_2>0$ such that 
  \begin{align*}
   \sup_{\Delta t >0} \mathbb{E}\Big[\int_0^T\|u_{\Delta t}(t)\|_{W_0^{1,p}(D)}^2\,{\rm d}t\Big] \le C_2\,,
  \end{align*}
  \item[iii)] $(u_{\Delta t})_{\Delta t >0}$ satisfies the Aldous condition in $W^{-1,p^\prime}(D)$.
 \end{itemize}
 Then the sequence  $\big(\mathcal{L}(u_{\Delta t})\big)_{\Delta t >0}$ is tight on $(\mathcal{Z}, \mathcal{T})$.
\end{thm}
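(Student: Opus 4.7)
The plan is to establish tightness of $(\mathcal{L}(u_{\Delta t}))$ separately on each of the four component spaces entering the definition of $\mathcal{Z}$, and then assemble them into tightness for the supremum topology $\mathcal{T}$. The assembly step is routine: given $\varepsilon>0$, choose compact sets $K_i$ in each component space so that $\mathbb{P}(u_{\Delta t}\in K_i)\ge 1-\varepsilon/4$; the intersection $\bigcap_i K_i$ is then compact in $(\mathcal{Z},\mathcal{T})$ because the topology $\mathcal{T}$ is the supremum of the four factor topologies and each $K_i$ is sequentially compact in its own topology.

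First I would handle tightness in $\mathbb{D}([0,T];W^{-1,p'}(D))$. Since $D$ is a bounded Lipschitz domain, the embedding $L^2(D)\hookrightarrow W^{-1,p'}(D)$ is compact by Rellich--Kondrachov and duality. Condition (i) together with Markov's inequality then gives, for each $t$, that $u_{\Delta t}(t)$ lies in a compact ball of $W^{-1,p'}(D)$ with probability close to one. Combined with the Aldous condition (iii) in $W^{-1,p'}(D)$, the classical Aldous--Métivier criterion for càdlàg processes (as in \cite{Metivier1988,motyl2013}) yields tightness in $\mathbb{D}([0,T];W^{-1,p'}(D))$.

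Next, tightness in $L^2_w(0,T;L^2(D))$ follows immediately from (i): it gives a uniform bound on $\mathbb{E}[\|u_{\Delta t}\|_{L^2(0,T;L^2(D))}^2]\le T C_1^2$, so by Chebyshev the laws concentrate on balls of $L^2(0,T;L^2(D))$, which are weakly compact by reflexivity. For $\mathbb{D}([0,T];L^2_w(D))$, I would use the Jakubowski-type criterion recalled in \cite[Lemma 3.3]{brzezniakmotyl2013}: pick a dense countable family $\{h_k\}\subset L^2(D)$ and show that each real-valued process $t\mapsto\int_D u_{\Delta t}(t)h_k\,{\rm d}x$ is tight in $\mathbb{D}([0,T];\mathbb{R})$; the scalar Aldous criterion follows from (iii) together with the continuous embedding $L^2(D)\hookrightarrow W^{-1,p'}(D)$ applied to $h_k$, while (i) controls the uniform suprema.

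The main obstacle is the strong component $L^2(0,T;L^2(D))$, which is a genuine compactness statement rather than merely a boundedness-plus-weak-topology argument. Here I would appeal to a stochastic Aubin--Lions--Simon scheme in the spirit of \cite[Lemma 2.5]{rozovskii}: condition (ii) supplies boundedness in $L^2(0,T;W_0^{1,p}(D))$, the Aldous condition (iii) supplies equicontinuity in the weaker space $W^{-1,p'}(D)$, and the compact chain $W_0^{1,p}(D)\hookrightarrow\!\!\hookrightarrow L^2(D)\hookrightarrow W^{-1,p'}(D)$ upgrades these two pieces into deterministic relative compactness of trajectories in $L^2(0,T;L^2(D))$ on sets of arbitrarily large probability. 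Transferring the deterministic Simon-type embedding into a probabilistic tightness statement, while keeping the Aldous time regularity compatible with the spatial interpolation, is the subtle point; all of this is packaged in \cite[Corollary 1]{motyl2013}, which is the natural reference to invoke once the three hypotheses (i)--(iii) have been verified.
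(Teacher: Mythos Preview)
Your proposal is correct and aligns exactly with the paper's treatment: the paper does not prove this theorem in-line but simply defers to \cite[Corollary~1]{motyl2013} (with pointers to \cite[Lemma~2.5]{rozovskii}, \cite[Theorem~2, Lemma~7]{motyl2013}, and \cite[Lemma~3.3]{brzezniakmotyl2013}), and your sketch is precisely an unpacking of the argument behind that corollary, ending at the same citation. One small slip: in your third bullet you write ``the continuous embedding $L^2(D)\hookrightarrow W^{-1,p'}(D)$ applied to $h_k$'', but what you actually need is to take $h_k$ in the dense subspace $W_0^{1,p}(D)\subset L^2(D)$ so that the $L^2$-pairing $(u_{\Delta t}(t),h_k)$ coincides with the duality pairing $\langle u_{\Delta t}(t),h_k\rangle_{W^{-1,p'},W_0^{1,p}}$, after which the Aldous condition (iii) in $W^{-1,p'}(D)$ transfers to the scalar process; the embedding of the process values is not the relevant point here.
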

With the help of Theorem \ref{thm:for-tightness}, we prove the tightness of the laws of the sequence $\{\tilde{u}_{\Delta t}\}$ in $(\mathcal{Z}, \mathcal{T})$. 
\begin{lem}\label{lem:tightness}
 The sequence $\big(\mathcal{L}(\tilde{u}_{\Delta t})\big)_{\Delta t >0}$ is tight on $(\mathcal{Z}, \mathcal{T})$. 
\end{lem}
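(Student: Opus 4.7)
The strategy is to apply Theorem \ref{thm:for-tightness} to the sequence $\{\tilde u_{\Delta t}\}$. Conditions (i) and (ii) there follow immediately from the a-priori bounds: the second line of \eqref{esti:a-priori-2} gives (i), and \eqref{esti:tilde-lp-w1p} combined with H\"older in time (using $p\ge 2$ and $T<\infty$, so that $L^p(0,T;W_0^{1,p}(D))\hookrightarrow L^2(0,T;W_0^{1,p}(D))$) gives (ii). The remaining work is to verify the Aldous condition in $W^{-1,p'}(D)$, via Lemma \ref{lem:Aldous-condition}.

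Summing the scheme \eqref{eq:discrete-p-laplace} up to a grid node and using that $\tilde u_{\Delta t}$ is the affine interpolant between nodes, a direct calculation yields the pathwise identity
\[
\tilde u_{\Delta t}(t)=\hat u_0+\int_{0}^{t}\mathrm{div}_x\bigl(|\nabla u_{\Delta t}|^{p-2}\nabla u_{\Delta t}+\vec f(u_{\Delta t})\bigr)\,ds+\tilde B_{\Delta t}(t),\qquad t\in[0,T],
\]
so that for a stopping time $\tau_n\le T$ and $\theta\ge 0$,
\[
\tilde u_{\Delta t}(\tau_n+\theta)-\tilde u_{\Delta t}(\tau_n)=I^1_{\Delta t}+I^2_{\Delta t}+I^3_{\Delta t},
\]
with $I^1,I^2$ the $p$-Laplace and flux drift-increments and $I^3=\tilde B_{\Delta t}(\tau_n+\theta)-\tilde B_{\Delta t}(\tau_n)$. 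For $I^1$ duality gives $\|\mathrm{div}_x(|\nabla v|^{p-2}\nabla v)\|_{W^{-1,p'}}\le\|\nabla v\|_{L^p}^{p-1}$, so H\"older in time and \eqref{esti:a-priori-2} yield $\mathbb{E}\|I^1_{\Delta t}\|_{W^{-1,p'}}^{p'}\le C\theta^{1/(p-1)}$. The term $I^2$ is handled via \ref{A2} and the embedding $W^{-1,2}(D)\hookrightarrow W^{-1,p'}(D)$ (valid on bounded $D$ since $p\ge 2$), giving $\mathbb{E}\|I^2_{\Delta t}\|_{W^{-1,p'}}^{2}\le C\theta^2$.

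The main obstacle is $I^3_{\Delta t}$: the process $\tilde B_{\Delta t}$ is the piecewise-affine interpolant of the martingale $B_{\Delta t}$ and is not itself a martingale, so the It\^o-L\'evy isometry is not directly available; moreover, differentiating $\tilde B_{\Delta t}$ in time introduces a factor $(\Delta t)^{-1/2}$ that blows up. The remedy is to pass to the adjacent grid points: let $t_k\le\tau_n<t_{k+1}$ and $t_\ell\le\tau_n+\theta<t_{\ell+1}$, and decompose
\[
I^3_{\Delta t}=\bigl(B_{\Delta t}(t_\ell)-B_{\Delta t}(t_k)\bigr)+\bigl(\tilde B_{\Delta t}(\tau_n+\theta)-B_{\Delta t}(t_\ell)\bigr)-\bigl(\tilde B_{\Delta t}(\tau_n)-B_{\Delta t}(t_k)\bigr).
\]
The first bracket is a genuine stopped martingale increment, so optional sampling with the It\^o-L\'evy isometry under \ref{A3}--\ref{A4} and \eqref{esti:a-priori-2} gives $\mathbb{E}\|\cdot\|_{L^2}^{2}\le C(t_\ell-t_k)\le C(\theta+\Delta t)$. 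Each of the remaining two brackets equals a factor bounded by $(\theta/\Delta t)\wedge 1$ times a single full grid-interval increment of $B_{\Delta t}$, and treating $\theta\ge\Delta t$ and $\theta<\Delta t$ separately (the interpolation factor supplies the extra smallness in the latter regime) produces a uniform bound of order $\theta$ in $L^2(\Omega;L^2(D))$. Finally, the continuous embedding $L^2(D)\hookrightarrow W^{-1,p'}(D)$ delivers $\mathbb{E}\|I^3_{\Delta t}\|_{W^{-1,p'}}^2\le C\theta$. Combining the three estimates, Lemma \ref{lem:Aldous-condition} yields the Aldous condition, and Theorem \ref{thm:for-tightness} concludes.
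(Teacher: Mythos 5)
Your proof follows the same route as the paper: hypotheses (i)--(ii) of Theorem \ref{thm:for-tightness} are read off from \eqref{esti:a-priori-2} and \eqref{esti:tilde-lp-w1p}, and the Aldous condition in $W^{-1,p^\prime}(D)$ is checked term by term in the decomposition \eqref{eq:time-cont-p-laplace} via Lemma \ref{lem:Aldous-condition}; your exponents $(\alpha,\zeta)$ and the separate treatment of the flux term differ only cosmetically, since any positive pair suffices. The one substantive difference is the noise term: the paper simply dominates the increment of the affine interpolant $\tilde B_{\Delta t}$ by the stochastic integral over $[\tau_m,\tau_m+\theta]$ and applies the It\^o--L\'evy isometry, whereas you first pass to the adjacent grid points and control the interpolation corrections separately --- a more careful (indeed more honest) justification of the same $O(\theta)$ bound.
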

\begin{proof}
 Thanks to the \textit{a-priori} estimates \eqref{esti:a-priori-2} and \eqref{esti:tilde-lp-w1p}, we see that assumptions ${\rm i)}$ and ${\rm ii)}$ of Theorem \ref{thm:for-tightness} hold for the sequence 
 $(\tilde{u}_{\Delta t})_{\Delta t >0}$. Hence it suffices to prove that the sequence $(\tilde{u}_{\Delta t})_{\Delta t >0}$ satisfies the Aldous condition in $W^{-1,p^\prime}(D)$. 
 Note that, we can rewrite \eqref{eq:discrete-p-laplace} in terms of ${u}_{\Delta t},\, \tilde{u}_{\Delta t}$, and $\tilde{B}_{\Delta t}$ as 
\begin{align}
 \tilde{u}_{\Delta t}(t)&=u_{0,\Delta t} + U + \int_0^t \mbox{div}_x\big(|\grad u_{\Delta t}(s)|^{p-2} \grad u_{\Delta t}(s) + \vec{f}(u_{\Delta t}(s))\big)\,{\rm d}s + \tilde{B}_{\Delta t}(t) \notag \\
 & := \hat{u}_0 + T_1^{\Delta t}(t) + T_2^{\Delta t}(t).\label{eq:time-cont-p-laplace}
\end{align}
First note that, since the term $\hat{u}_0$ is independent of time, clearly \eqref{inq:Aldous-condition} is satisfied for any $\alpha, \zeta$. 
In view of Lemma \ref{lem:Aldous-condition}, we need to show that $T_1^{\Delta t}(t)$ and $\,T_2^{\Delta t}(t)$ satisfy the inequality \eqref{inq:Aldous-condition} for a suitable choices of 
$\alpha, \zeta$. Let $(\tau_m)$ be a sequence of stopping times with $\tau_m \le T$, and $\theta >0$. Then, by using \eqref{esti:a-priori-2} we have
\begin{align*}
& \mathbb{E}\Big[  \big\| T_1^{\Delta t}(\tau_m + \theta)-T_1^{\Delta t}(\tau_m)\big\|_{W^{-1,p^\prime}(D)}\Big] \\
 &=\mathbb{E}\Big[ \big\|\int_{\tau_m}^{\tau_m + \theta} \mbox{div}_x\big(|\grad u_{\Delta t}(s)|^{p-2} \grad u_{\Delta t}(s) + \vec{f}(u_{\Delta t}(s))\big)\,{\rm d}s\big\|_{W^{-1,p^\prime}(D)}\Big] \\
 & \le \mathbb{E}\Big[\int_{\tau_m}^{\tau_m + \theta}\big\||\grad u_{\Delta t}(s)|^{p-2} \grad u_{\Delta t}(s) + \vec{f}(u_{\Delta t}(s))\big\|_{L^{p^\prime}(D)}\,{\rm d}s\Big] \\
 & \le C\mathbb{E}\Big[\int_{\tau_m}^{\tau_m + \theta}\Big(\big\| \grad u_{\Delta t}(s)\big\|_{L^p(D)}^{p-1} + \big\|u_{\Delta t}(s)\big\|_{L^{p^\prime}(D)}\Big)\,{\rm d}s\Big] \\
 & \le C \theta^\frac{1}{p} \mathbb{E}\Big[ \int_0^T\Big( \big\| \grad u_{\Delta t}(s)\big\|_{L^p(D)}^{p} + \big\|u_{\Delta t}(s)\big\|_{L^{p^\prime}(D)}^{p^\prime}\Big)\,{\rm d}s\Big] 
 \le C \theta^\frac{1}{2}.
\end{align*}
Thus $T_1^{\Delta t}(t)$ satisfies \eqref{inq:Aldous-condition} with $\alpha=1$ and $\zeta=\frac{1}{2}$. Again, thanks to It\^{o}-L\'{e}vy isometry, the assumptions \ref{A3}-\ref{A4}, and 
\eqref{esti:a-priori-2}, and since $W_0^{1,p}(D)\hookrightarrow L^2(D)\hookrightarrow W^{-1,p^\prime}(D)$, we see that 
\begin{align*}
 & \mathbb{E}\Big[\big\| T_2^{\Delta t}(\tau_m +\theta)-T_2^{\Delta t}(\tau_m)\big\|_{W^{-1,p^\prime}(D)}^2\Big] \\
 & \le  \mathbb{E}\Big[\big\|  \int_{\tau_m}^{\tau_m + \theta} \int_{|z|>0} \eta(\bar{u}_{\Delta t}(s);z) \widetilde{N}({\rm d}z,{\rm d}s)\big\|_{W^{-1,p^\prime}(D)}^2\Big] \\
 & \le C \mathbb{E}\Big[\big\|  \int_{\tau_m}^{\tau_m + \theta} \int_{|z|>0} \eta(\bar{u}_{\Delta t}(s);z) \widetilde{N}({\rm d}z,{\rm d}s)\big\|_{L^2(D)}^2\Big] \\
 & \le C \mathbb{E}\Big[ \int_{\tau_m}^{\tau_m + \theta} \int_{|z|>0} \big\| \eta(\bar{u}_{\Delta t}(s);z)\big\|_{L^2(D)}^2 \,m({\rm d}z)\,{\rm d}s\Big] \\
 & \le C \mathbb{E}\Big[ \int_{\tau_m}^{\tau_m + \theta} \int_{|z|>0} \big\| \bar{u}_{\Delta t}(s)\big\|_{L^2(D)}^2(1\wedge |z|^2) \,m({\rm d}z)\,{\rm d}s\Big] \\
 & \le C c_{\eta} \mathbb{E}\Big[ \int_{\tau_m}^{\tau_m + \theta} \big\| \bar{u}_{\Delta t}(s)\big\|_{L^2(D)}^2\,{\rm d}s\Big]
 \le C \theta \mathbb{E}\Big[\sup_{s\in [0,T]}\big\| u_{\Delta t}(s)\big\|_{L^2(D)}^2\Big] \le C \theta.
\end{align*}
Hence $T_2^{\Delta t}(t)$ satisfies \eqref{inq:Aldous-condition} with $\alpha=2$ and $\zeta=1$. This completes the proof.
\end{proof}
\subsection{Construction of a martingale solution} Construction of a martingale solution is based on Skorokhod Theorem \cite{Jakubowski} for a non metric space. 
Note that $\mathcal{Z}$ is a locally convex topological space and there exist a sequence of continuous functions $f_m: \mathcal{Z}\goto \R$ that 
separates the points of $\mathcal{Z}$ which generates the Borel $\sigma$-algebra; cf.~\cite[Remark $2$]{motyl2013}. 
Let $\bar{\mathbb{N}}$ denotes the set of all extended natural numbers \textit{i.e.,} $\bar{\mathbb{N}}:= \mathbb{N}\cup \{ \infty\}$.
For any measurable space $(S,\mathcal{B}(S))$, we denote by $M_{\bar{\mathbb{N}}}(S)$ the set of all $\bar{\mathbb{N}}$-valued  measures on 
$(S,\mathcal{B}(S))$ endowed with the $\sigma$-field $\mathcal{M}_{\bar{\mathbb{N}}}(S)$ generated by the projection maps 
$i_B: M_{\bar{\mathbb{N}}}(S)\ni \mu\mapsto \mu(B)\in \bar{\mathbb{N}}$ for all $B\in \mathcal{B}(S)$.
Define $N_{\Delta t}({\rm d}z,{\rm d}t)=N({\rm d}z,{\rm d}t)$ for all $\Delta t>0$. For the basic properties of the stochastic integral with respect to compensated Poisson random measure $\widetilde{N}$, we refer to see 
\cite{erika2009, Watanabe-1981} and \cite{peszat}. Since $M_{\bar{\mathbb{N}}}(\R\times [0,T])$ is a separable metric space, by \cite[Theorem 3.2]{parthasarathy}, the laws of the family 
$\{ N_{\Delta t}({\rm d}z,{\rm d}t)\}$ is tight on $M_{\bar{\mathbb{N}}}(\R\times [0,T])$.
\vspace{.1cm}

Define $U_{\Delta t}= U$ for all $\Delta t >0$ and $\mathbb{X}_{U}= \big( W^{1,p}(D),w\big)$.\footnote[5]{We denote by $(\mathbb{Y},w)$ the topological space $\mathbb{Y}$ equipped with the weak topology.}
\begin{lem}\label{lem:tightness-control}
 The set $\{ \mathcal{L}(U_{\Delta t}): \Delta t >0\}$ is tight in $\mathbb{X}_{U}$.
\end{lem}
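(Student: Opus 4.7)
The plan is to exploit the fact that $U_{\Delta t}=U$ for every $\Delta t >0$, so the family $\{\mathcal{L}(U_{\Delta t})\}_{\Delta t>0}$ consists of a single probability measure on $W^{1,p}(D)$. Tightness then reduces to producing, for each $\varepsilon>0$, a single weakly compact set $K_\varepsilon \subset W^{1,p}(D)$ with $\mathbb{P}(U\in K_\varepsilon)\ge 1-\varepsilon$.

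First I would use the integrability assumption $U\in L^p(\Omega; W^{1,p}(D))$ together with Chebyshev's inequality: for any $\varepsilon>0$, choose
\begin{align*}
 R_\varepsilon := \Big(\varepsilon^{-1}\,\mathbb{E}\big[\|U\|_{W^{1,p}(D)}^p\big]\Big)^{1/p},
\end{align*}
so that $\mathbb{P}\bigl(\|U\|_{W^{1,p}(D)}>R_\varepsilon\bigr) \le \varepsilon$.

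Next I would take $K_\varepsilon$ to be the closed ball
\begin{align*}
 K_\varepsilon = \bigl\{v\in W^{1,p}(D):\|v\|_{W^{1,p}(D)}\le R_\varepsilon\bigr\}.
\end{align*}
Since $p>2>1$, the Sobolev space $W^{1,p}(D)$ is reflexive and separable, so by the Banach--Alaoglu--Kakutani theorem $K_\varepsilon$ is weakly compact (and, on $K_\varepsilon$, the weak topology is metrizable, which is convenient but not strictly needed here). Consequently $K_\varepsilon$ is compact in $\mathbb{X}_U=(W^{1,p}(D),w)$, and
\begin{align*}
 \mathbb{P}\bigl(U_{\Delta t}\in K_\varepsilon\bigr) = \mathbb{P}\bigl(U\in K_\varepsilon\bigr) \ge 1-\varepsilon \qquad \text{for every } \Delta t>0.
\end{align*}
This establishes tightness of $\{\mathcal{L}(U_{\Delta t})\}_{\Delta t>0}$ on $\mathbb{X}_U$.

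There is no genuine obstacle here: the only mild point of care is that tightness is being claimed with respect to the weak topology, so I rely on reflexivity of $W^{1,p}(D)$ (which holds thanks to $p>2$) to conclude that closed norm balls are indeed compact for this topology. All the work has already been done in the hypothesis $U\in L^p(\Omega;W^{1,p}(D))$.
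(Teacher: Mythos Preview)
Your proof is correct and follows essentially the same approach as the paper: both use the uniform $p$-th moment bound $\mathbb{E}[\|U\|_{W^{1,p}(D)}^p]<\infty$, apply Chebyshev's inequality to control the tails, and invoke weak compactness of closed norm balls in the reflexive space $W^{1,p}(D)$ to produce the required compact sets in $\mathbb{X}_U$.
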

\begin{proof}
 Note that $\displaystyle \sup_{\Delta t >0} \mathbb{E}\big[\|U_{\Delta t}\|_{W^{1,p}(D)}^p \big]=  \mathbb{E}\big[\|U\|_{W^{1,p}(D)}^p \big] < + \infty$. Now for any $R>0$, the set 
 \begin{align*}
  B_R:= \big\{ U\in W^{1,p}(D):~ \|U\|_{W^{1,p}(D)} \le R\big\}
 \end{align*}
is relatively compact in $\mathbb{X}_{U}$ and 
\begin{align*}
 \mathbb{P}\big( \|U_{\Delta t}\|_{W^{1,p}(D)} \ge R\big) \le \frac{1}{R^p} \mathbb{E}\big[\|U_{\Delta t}\|_{W^{1,p}(D)}^p \big] \le \frac{C}{R^p}\,,
\end{align*}
which yields the proof.
\end{proof}
By Lemma \ref{lem:tightness}, the set of measures $\big(\mathcal{L}(\tilde{u}_{\Delta t})\big)_{\Delta t >0}$ is
tight on $(\mathcal{Z}, \mathcal{T})$. Hence, in view of Lemma \ref{lem:tightness-control}, the set $\big\{ \mathcal{L}(\tilde{u}_{\Delta t}, U_{\Delta t}, N_{\Delta t}): \Delta t >0\big\}$ is tight on 
$\mathcal{X}:=\mathcal{Z}\times \mathbb{X}_U \times M_{\bar{\mathbb{N}}}(\R\times [0,T])$. Note that the space $\mathcal{X}$ is non-metric space, and hence our compactness argument is based on the Jakubowski-Skorokhod 
representation theorem. Moreover, by using \cite[Corollary 2]{motyl2013}, see also \cite[Theorem $D1$]{erika2014}, we arrive at the following result.
\begin{prop}\label{prop:skorokhod-representation}
There exist a subsequence of $\{\Delta t\}$, still
we denote it by same $\{\Delta t\}$, a probability space $(\bar{\Omega}, \bar{\mathcal{F}}, \bar{\mathbb{P}})$ and, on this space $\mathcal{X}$-valued 
random variables $(u_*, U_*, N_*)$ and $(u_{\Delta t}^*, U_{\Delta t}^*, N_{\Delta t}^*)$ such that 
\begin{itemize}
 \item [a).] $\mathcal{L}(u_{\Delta t}^*, U_{\Delta t}^*, N_{\Delta t}^*)=\mathcal{L}(\tilde{u}_{\Delta t}, U_{\Delta t}, N_{\Delta t})$ for all $\Delta t >0$,
 \item [b).] $(u_{\Delta t}^*, U_{\Delta t}^*, N_{\Delta t}^*)\goto (u_*,U_*, N_*)$ in $\mathcal{X}\quad \bar{\mathbb{P}}$-a.s. as $\Delta t \goto 0$,
 \item [c).] $N_{\Delta t}^*(\bar{\omega})= N_*(\bar{\omega})$ for all $\bar{\omega}\in \bar{\Omega}$.
\end{itemize}
\end{prop}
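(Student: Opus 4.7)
The plan is to combine the three separate tightness statements into joint tightness on the product space $\mathcal{X}$ and then invoke the Jakubowski version of the Skorokhod representation theorem for non-metric spaces, which the paper has already set up by recording that $\mathcal{Z}$ carries a countable family of continuous real-valued functions separating points.

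First I would assemble joint tightness of $\mathcal{L}(\tilde{u}_{\Delta t}, U_{\Delta t}, N_{\Delta t})$ on $\mathcal{X}=\mathcal{Z}\times \mathbb{X}_U\times M_{\bar{\mathbb{N}}}(\R\times [0,T])$. The marginal on $\mathcal{Z}$ is tight by Lemma \ref{lem:tightness}; the marginal on $\mathbb{X}_U=(W^{1,p}(D),w)$ is tight by Lemma \ref{lem:tightness-control}; and the marginal on the separable metric space $M_{\bar{\mathbb{N}}}(\R\times [0,T])$ is tight because $N_{\Delta t}\equiv N$, so the law is constant, hence tight by \cite[Theorem~3.2]{parthasarathy} as already pointed out before the proposition. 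Finite products of tight families are tight, so the joint law is tight on $\mathcal{X}$ with the product topology.

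Next I would verify the structural hypothesis of the Jakubowski--Skorokhod theorem for $\mathcal{X}$: a countable family of continuous real-valued functions separating points. This holds on $\mathcal{Z}$ by the remark preceding the proposition; on $\mathbb{X}_U=(W^{1,p}(D),w)$ it follows from separability of the dual of $W^{1,p}(D)$ via duality pairings with a countable dense subset; on the separable metric space $M_{\bar{\mathbb{N}}}(\R\times [0,T])$ it is automatic. Pulling these families back through the three coordinate projections supplies the required separating family on the product. Applying the Jakubowski--Skorokhod theorem, in the form of \cite[Corollary~2]{motyl2013} (see also \cite[Theorem~D1]{erika2014}), then produces, along a subsequence of $\{\Delta t\}$, a probability space $(\bar{\Omega},\bar{\mathcal{F}},\bar{\mathbb{P}})$ carrying $\mathcal{X}$-valued random variables $(u_{\Delta t}^*,U_{\Delta t}^*,N_{\Delta t}^*)$ and $(u_*,U_*,N_*)$ with property (a) and $\bar{\mathbb{P}}$-a.s. convergence (b).

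The main obstacle is to secure property (c), i.e.\ that the copies of the Poisson random measure are pointwise identical on $\bar{\Omega}$. This is not automatic from the abstract representation, but it is the content of the refined version of the Jakubowski--Skorokhod theorem for Poisson random measures used in \cite[Corollary~2]{motyl2013}: since $N_{\Delta t}=N$ is independent of $\Delta t$, the marginal law on $M_{\bar{\mathbb{N}}}(\R\times [0,T])$ is constant, and the Skorokhod copies can be chosen so that $N_{\Delta t}^*(\bar{\omega})=N_*(\bar{\omega})$ for every $\bar{\omega}\in\bar{\Omega}$. Once this is invoked, assertions (a)--(c) are immediate.
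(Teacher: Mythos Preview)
Your proposal is correct and follows essentially the same approach as the paper: assemble joint tightness on $\mathcal{X}$ from the three marginal tightness results (Lemmas \ref{lem:tightness} and \ref{lem:tightness-control}, plus tightness of the constant family $N_{\Delta t}\equiv N$), and then invoke the Jakubowski--Skorokhod theorem in the form of \cite[Corollary~2]{motyl2013} (see also \cite[Theorem~D1]{erika2014}) to obtain (a)--(c). Your write-up is in fact more explicit than the paper's, which merely states that the proposition follows from these citations; in particular your discussion of the separating-family hypothesis on the product space and of how property (c) comes from the refined version of the theorem for constant Poisson marginals is a helpful elaboration of what the paper leaves implicit.
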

Moreover, by \cite[Theorem $1.10.4$ \&\ Addendum $1.10.5$]{wellner}, there exist a sequence of
 perfect functions $\phi_{\Delta t}:\bar{\Omega}\to\Omega$ such that
 \begin{align}
  u_{\Delta t}^*=\tilde{u}_{\Delta t}\circ\phi_{\Delta t}\,, \quad U_{\Delta t}^*= U_{\Delta t} \circ \phi_{\Delta t}\,, \quad \mathbb{P}=\bar{\mathbb{P}}\circ \phi_{\Delta t}^{-1}\,. \label{eq:perfect-function}
 \end{align}
Let $\bar{\mathbb{F}}:= \big( \bar{\mathcal{F}}_t\big)_{t\in [0,T]}$ be the filtration defined by 
\begin{align}
 \bar{\mathcal{F}}_t:= \sigma\big\{(u_{\Delta t}^*(s),\,N_{\Delta t}^*(s),\, u_*(s)): 0\le s\le t\big\}, \quad t\in [0,T]. \label{eq:filtration}
\end{align}
Note that since $N_{\Delta t}^*(\bar{\omega})= N_*(\bar{\omega})$ for all $\bar{\omega}\in \bar{\Omega}$, the filtration obtained by replacing $N_{\Delta t}^*$ by $N_*$ in \eqref{eq:filtration} 
is equal to $\bar{\mathbb{F}}$.  Moreover, $N_{\Delta t}^*,\, N_*$ are time homogeneous Poisson random measures on $\R$ over the stochastic basis
$(\bar{\Omega}, \bar{\mathcal{F}}, \bar{\mathbb{P}},\bar{\mathbb{F}})$ with intensity measure $m({\rm d}z)$; cf.~\cite[Section 9]{erika2014}.
\vspace{.1cm}

Let us define 
\begin{equation}\label{defi:discrete-variables-new-prob-space}
 \begin{aligned}
  v_k& =\hat{u}_k\circ \phi_{\Delta t}, \quad k=0,1,\cdots, N, \\
   v_{\Delta t}(t)&= \sum_{k=0}^{N-1}v_{k+1} {\bf 1}_{[t_k, t_{k+1})}(t) \quad t\in [0,T], \\
  \bar{v}_{\Delta t}(t)&= \sum_{k=0}^{N-1} v_k {\bf 1}_{(t_k, t_{k+1}]}(t) \quad t\in (0,T]\,\,\,\text{with}\,\,\, \bar{v}_{\Delta t}(0)=u_{0,\Delta t} + U_{\Delta t}^*, \\
  B_{\Delta t}^*(t)&= \int_0^t \int_{|z|>0} \eta(\bar{v}_{\Delta t}(s);z) \widetilde{N}_{\Delta t}^*({\rm d}z,{\rm d}s).
 \end{aligned}
\end{equation}
Note that, thanks to \eqref{eq:perfect-function}, \eqref{defi:discrete-variables-new-prob-space} and \eqref{eq:discrete-p-laplace}, 
  we have, for any $k=0,1,\cdots, N-1$ and $\bar{\mathbb{P}}$ a.s., 
   \begin{align}
   & v_{k+1}-v_k -\Delta t\, \mbox{div}_x  \big(|\nabla v_{k+1}|^{p-2}\nabla v_{k+1} + \vec{f}(v_{k+1})\big)
   = \int_{|z|>0}\int_{t_k}^{t_{k+1}}\eta(v_k;z) \widetilde{N}_{\Delta t}^*({\rm d}z,{\rm d}t)\,, \label{eq:discrete-p-laplace-new-prob-space} \\
   & u_{\Delta t}^*(t)= \sum_{k=0}^{N-1}\Big( \frac{v_{k+1}-v_k}{\Delta t}(t-t_k) + v_k\Big) {\bf 1}_{[t_k,t_{k+1})}(t), \quad t\in [0,T) \,\,\text{with}
  \,\,\, u_{\Delta t}^*(T)= v_N \,.\label{eq:u-delta t-star-new}
  \end{align}
Moreover the estimate \eqref{a-prioriestimate:1} remains valid for $v_k:\, k=0,1,\cdots, N$. Furthermore, thanks to \eqref{esti:a-priori-2}-\eqref{esti:tilde-lp-w1p}, and Proposition \ref{prop:skorokhod-representation},
there hold
\begin{align}
 & \bar{\mathbb{E}}\Big[ \sup_{t\in [0,T]} \|u_{\Delta t}^*(t)\|_{L^2(D)}^2\Big]= \bar{\mathbb{E}}\Big[ \sup_{t\in [0,T]} \|v_{\Delta t}(t)\|_{L^2(D)}^2\Big] 
  \le C\big( \|u_0\|_{L^2(D)}^2  + \bar{\mathbb{E}}[\|U_*\|_{L^2(D)}^2]\big)\,, \label{esti:l2-new} \\
 & \bar{\mathbb{E}}\Big[\|u_{\Delta t}^*\|_{L^p(0,T; W_0^{1,p}(D))}\Big] \le C \bar{\mathbb{E}}\Big[ \|u_0\|_{L^2(D)}^2 + \|U_*\|_{W^{1,p}(D)}^p\Big]\,. \label{esti:lp-grad-new}
\end{align}
\begin{lem}\label{lem:convergence-1}
We have the following:
\begin{itemize}
 \item [i)] $u_{\Delta t}^*\goto u_*$ in $L^q\big(\bar{\Omega}; L^2(0,T; L^2(D))\big)$ for all $1\le q<p$. \\
 \item[ii)] $v_{\Delta t} \goto u_*$ in $L^2\big(\bar{\Omega}; L^2(0,T; L^2(D))\big)$. \\
 \item [iii)] $u_{\Delta t}^* \stackrel{*}{\rightharpoonup} u_*$ in $L_w^2\big(\bar{\Omega}; L^\infty(0,T; L^2(D))\big)$.
\end{itemize}
\end{lem}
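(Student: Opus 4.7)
The plan is to exploit the $\bar{\mathbb{P}}$-a.s.\ convergence of $u_{\Delta t}^\ast\to u_\ast$ in $\mathcal{Z}$ (which embeds into $L^2(0,T;L^2(D))$) provided by Proposition~\ref{prop:skorokhod-representation}, and to upgrade this pointwise convergence to the asserted modes of convergence by combining it with moment bounds inherited from the original probability space via the law identity $\mathcal{L}(u_{\Delta t}^\ast)=\mathcal{L}(\tilde u_{\Delta t})$. No new \emph{a-priori} work is really needed: all moment estimates on $\tilde u_{\Delta t}$ transfer literally to $u_{\Delta t}^\ast$.

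For part (i), I would first observe that, by the law identity and \eqref{esti:tilde-lp-w1p}, one has the refined bound
\begin{equation*}
\bar{\mathbb{E}}\Big[\|u_{\Delta t}^\ast\|_{L^p(0,T;W_0^{1,p}(D))}^p\Big]=\mathbb{E}\Big[\|\tilde u_{\Delta t}\|_{L^p(0,T;W_0^{1,p}(D))}^p\Big]\le C.
\end{equation*}
Since $p>2$ and $D$ is bounded, the Sobolev embedding $W_0^{1,p}(D)\hookrightarrow L^2(D)$ together with Hölder's inequality in time yields $\|u_{\Delta t}^\ast\|_{L^2(0,T;L^2(D))}\le C\|u_{\Delta t}^\ast\|_{L^p(0,T;W_0^{1,p}(D))}$, whence
\begin{equation*}
\sup_{\Delta t>0}\bar{\mathbb{E}}\Big[\|u_{\Delta t}^\ast\|_{L^2(0,T;L^2(D))}^p\Big]\le C.
\end{equation*}
This gives uniform integrability of $\|u_{\Delta t}^\ast - u_\ast\|_{L^2(D_T)}^q$ for every $1\le q<p$. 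Combined with the $\bar{\mathbb{P}}$-a.s.\ convergence $u_{\Delta t}^\ast\to u_\ast$ in $L^2(0,T;L^2(D))$ inherited from $\mathcal{Z}$, Vitali's convergence theorem delivers (i).

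For part (ii), the idea is to bridge $v_{\Delta t}$ and $u_{\Delta t}^\ast$ through the $\Delta t$-rate estimate in \eqref{esti:a-priori-2}. Since $v_{\Delta t}$ (resp.\ $u_{\Delta t}^\ast$) is the piecewise-constant (resp.\ piecewise-affine) interpolant of the $v_k$'s on $(\bar\Omega,\bar{\mathcal{F}},\bar{\mathbb{P}})$, the law identity gives
\begin{equation*}
\bar{\mathbb{E}}\Big[\|v_{\Delta t}-u_{\Delta t}^\ast\|_{L^2(D_T)}^2\Big]=\mathbb{E}\Big[\|u_{\Delta t}-\tilde u_{\Delta t}\|_{L^2(D_T)}^2\Big]\le C\,\Delta t\longrightarrow 0.
\end{equation*}
Since (i) with $q=2$ (admissible because $p>2$) yields $u_{\Delta t}^\ast\to u_\ast$ in $L^2(\bar\Omega;L^2(D_T))$, the triangle inequality gives (ii).

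For part (iii), estimate \eqref{esti:l2-new} shows that $(u_{\Delta t}^\ast)$ is bounded in $L^2(\bar\Omega;L^\infty(0,T;L^2(D)))$, the dual of the separable space $L^2(\bar\Omega;L^1(0,T;L^2(D)))$. Banach--Alaoglu furnishes a subsequence and a weak-$\ast$ limit $\hat u\in L_w^2(\bar\Omega;L^\infty(0,T;L^2(D)))$. To identify $\hat u=u_\ast$, I would test against an arbitrary $\varphi\in L^2(\bar\Omega;L^1(0,T;L^2(D)))$; the weak-$\ast$ convergence gives $\bar{\mathbb{E}}\int_0^T\langle u_{\Delta t}^\ast(t),\varphi(t)\rangle\,{\rm d}t\to \bar{\mathbb{E}}\int_0^T\langle\hat u(t),\varphi(t)\rangle\,{\rm d}t$, while (i) (with $q=2$) gives the same limit with $u_\ast$ in place of $\hat u$. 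Uniqueness of the duality pairing then forces $\hat u=u_\ast$, and since the limit is independent of the subsequence, the full sequence converges.

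The only step that requires any genuine care is ensuring that the law-preservation identity in Proposition~\ref{prop:skorokhod-representation}(a) legitimately upgrades the first-moment gradient bound \eqref{esti:lp-grad-new} to a full $p$-th moment bound, because without that upgrade one obtains only $L^1(\bar\Omega;L^2(D_T))$-uniform integrability, which is too weak for the range $1\le q<p$ in (i). This is handled by applying the law identity directly to the estimate \eqref{esti:tilde-lp-w1p}, which controls the $p$-th power rather than its first power.
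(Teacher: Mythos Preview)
Your argument is correct and follows essentially the same route as the paper: for (i) you use the $\bar{\mathbb P}$-a.s.\ convergence in $L^2(D_T)$ supplied by the Skorokhod construction together with a uniform $p$-th moment bound to invoke Vitali; for (ii) you control $\|v_{\Delta t}-u_{\Delta t}^*\|_{L^2(D_T)}$ by $C\Delta t$ and combine with (i); and for (iii) you extract a weak-$*$ limit via Banach--Alaoglu and identify it through the strong convergence already obtained. Your closing remark is well taken: the displayed estimate \eqref{esti:lp-grad-new} as written is only a first-moment bound, and the $L^p(\bar\Omega;L^2(D_T))$ boundedness one actually needs comes from transferring the $p$-th moment estimate \eqref{esti:tilde-lp-w1p} through the law identity, exactly as you do.
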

\begin{proof} We use the estimates \eqref{esti:l2-new}-\eqref{esti:lp-grad-new} to prove the lemma. 
\vspace{.2cm}

\noindent Proof of ${\rm i).}$ In view of \eqref{eq:u-delta t-star-new}, \eqref{esti:lp-grad-new} and the definition of $v_{\Delta t}$ in \eqref{defi:discrete-variables-new-prob-space}, we see
that the sequence $\{u_{\Delta t}^*\}$ is uniformly bounded in $L^p(\bar{\Omega}; L^2(0,T; L^2(D)))$ and therefore equi-integrable in $L^q(\bar{\Omega}; L^2(0,T; L^2(D)))$ for all $1\le q<p$. Since 
$\bar{\mathbb{P}}$-a.s., $u_{\Delta t}^* \goto u_*$ in $\mathcal{Z}$ (in particular, $u_{\Delta t}^* \goto u_*$ in $L^2(0,T;L^2(D))$), by Vitali convergence theorem we conclude that
${\rm i)}$ holds as well.
\vspace{.1cm}

\noindent Proof of {\rm ii)}. A straightforward calculation reveals that 
\begin{align}
 \bar{\mathbb{E}}\Big[ \int_{0}^T \|u_{\Delta t}^*(t)-v_{\Delta t}\|_{L^2(D)}^2\,{\rm d}t\Big] \le C \Delta t. \label{inq:diff-new-1}
\end{align}
Thanks to {\rm i)}, we see that $u_{\Delta t}^* \goto u_*$ in $L^2\big(\bar{\Omega}; L^2(0,T; L^2(D))\big)$ and hence ${\rm ii)}$ follows from \eqref{inq:diff-new-1}.
\vspace{.1cm}

\noindent Proof of {\rm iii)}. Note that, by \eqref{esti:l2-new}, the sequence $\{u_{\Delta t}^*\}$ is uniformly bounded in \\
$L^2(\bar{\Omega}; L^\infty(0,T;L^2(D)))$. Since $L^2(\bar{\Omega}; L^\infty(0,T;L^2(D)))$ is isomorphic to the space \\ $\big(L^2(\bar{\Omega}; L^1(0,T;L^2(D)))\big)^*$, by Banach Alaoglu theorem
there exist a subsequence, still denoted by $\{u_{\Delta t}^*\}$, and $Y\in L^2(\bar{\Omega}; L^\infty(0,T;L^2(D)))$ such that for all $\psi \in 
 L^2(\bar{\Omega}; L^1(0,T;L^2(D)))$
\begin{align*}
 \bar{\mathbb{E}}\Big[\int_0^T\int_{D} u_{\Delta t}^*(t,x)\psi(t,x)\,{\rm d}x\,{\rm d}t\Big] \goto  \bar{\mathbb{E}}\Big[\int_0^T\int_{D} Y(t,x)\psi(t,x)\,{\rm d}x\,{\rm d}t\Big]\,.
\end{align*}
Observe that, thanks to {\rm i)}, $u_{\Delta t}^*\rightharpoonup u_*$ in $L^2\big(\bar{\Omega}; L^2(0,T; L^2(D))\big)$. Since $L^2\big(\bar{\Omega}; L^2(0,T; L^2(D))\big)$ is a dense subspace of
$L^2\big(\bar{\Omega}; L^1(0,T;L^2(D))\big)$, we conclude that 
\begin{align*}
Y=u_*\quad \text{and}\quad u_*\in L^2\big(\bar{\Omega}; L^\infty(0,T;L^2(D))\big)\,.
\end{align*}
This completes the proof. 
\end{proof}

\begin{lem}\label{lem:conv-martingale-1}
For all $\phi\in W_0^{1,p}(D)$, the following holds
 \begin{align}
 \lim_{\Delta t \goto 0} \bar{\mathbb{E}}\Big[\int_0^T \Big|\int_0^t \int_{|z|>0} \big\langle \eta(\bar{v}_{\Delta t}(s);z)- \eta(u_*(s-);z),
 \phi\big\rangle \widetilde{N}_*({\rm d}z,{\rm d}s)\Big|^2\,{\rm d}t\Big]=0\,.
 \label{limit-martingale}
\end{align}
\end{lem}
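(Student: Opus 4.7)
The strategy is to convert the $L^2$-norm of the stochastic integral on the left hand side into a deterministic integral via the It\^{o}--L\'{e}vy isometry, use hypothesis \ref{A3} to estimate the integrand by $\|\bar{v}_{\Delta t}(s)-u_*(s-)\|_{L^2(D)}^2$, and then invoke the convergence $\bar{v}_{\Delta t}\to u_*$ in $L^2(\bar{\Omega}\times(0,T);L^2(D))$.

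First, I would check that $\bar{v}_{\Delta t}$ is $\bar{\mathbb{F}}$-predictable (it is left-continuous and adapted by construction), and that $u_*(s-)$ is $\bar{\mathbb{F}}$-predictable as the left-limit of a c\`{a}dl\`{a}g process. Applying the It\^{o}--L\'{e}vy isometry together with Cauchy--Schwarz in $L^2(D)$ gives, for each $t\in[0,T]$,
\begin{align*}
\bar{\mathbb{E}}\Big[\Big|\int_0^t\!\!\int_{|z|>0}\big\langle \eta(\bar{v}_{\Delta t}(s);z)-\eta(u_*(s-);z),\phi\big\rangle\widetilde{N}_*({\rm d}z,{\rm d}s)\Big|^2\Big] \\
\le \|\phi\|_{L^2(D)}^2\,\bar{\mathbb{E}}\Big[\int_0^t\!\!\int_{|z|>0}\|\eta(\bar{v}_{\Delta t}(s);z)-\eta(u_*(s-);z)\|_{L^2(D)}^2\,m({\rm d}z)\,{\rm d}s\Big].
\end{align*}
Invoking the Lipschitz bound in \ref{A3} and the integrability in \ref{A4} yields the pointwise-in-$t$ estimate
\begin{align*}
\bar{\mathbb{E}}\Big[\Big|\int_0^t\!\!\int_{|z|>0}\big\langle \eta(\bar{v}_{\Delta t}(s);z)-\eta(u_*(s-);z),\phi\big\rangle\widetilde{N}_*({\rm d}z,{\rm d}s)\Big|^2\Big] \le C\,\bar{\mathbb{E}}\Big[\int_0^t \|\bar{v}_{\Delta t}(s)-u_*(s-)\|_{L^2(D)}^2\,{\rm d}s\Big],
\end{align*}
with $C=(\lambda^*)^2 c_\eta\|\phi\|_{L^2(D)}^2$. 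Integrating in $t\in[0,T]$ and using Fubini, it therefore suffices to show that $\bar{v}_{\Delta t}\to u_*$ in $L^2(\bar{\Omega}\times(0,T);L^2(D))$ (recalling that $s\mapsto u_*(s-)=u_*(s)$ outside a Lebesgue-null set in time).

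The key step is to relate $\bar{v}_{\Delta t}$ to $v_{\Delta t}$, whose convergence is already available from Lemma~\ref{lem:convergence-1}(ii). Since $\bar{v}_{\Delta t}(t)=v_k$ on $(t_k,t_{k+1}]$ and $v_{\Delta t}(t)=v_{k+1}$ on $[t_k,t_{k+1})$, a direct computation gives
\begin{align*}
\bar{\mathbb{E}}\Big[\int_0^T\|\bar{v}_{\Delta t}(t)-v_{\Delta t}(t)\|_{L^2(D)}^2\,{\rm d}t\Big] = \Delta t\sum_{k=0}^{N-1}\bar{\mathbb{E}}\big[\|v_{k+1}-v_k\|_{L^2(D)}^2\big]\le C\Delta t,
\end{align*}
by the discrete a priori bound \eqref{a-prioriestimate:1} (which is preserved for $\{v_k\}$ via the equality of laws in Proposition~\ref{prop:skorokhod-representation}). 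Combining this with Lemma~\ref{lem:convergence-1}(ii), the triangle inequality delivers $\bar{v}_{\Delta t}\to u_*$ in $L^2(\bar{\Omega};L^2(0,T;L^2(D)))$, and \eqref{limit-martingale} follows. The only mildly delicate point is the auxiliary estimate $\bar{\mathbb{E}}[\sum_k\|v_{k+1}-v_k\|_{L^2(D)}^2]\le C$ on the new probability space, which is ensured by the equality of laws guaranteed by the Skorokhod representation and the perfect-function identification \eqref{eq:perfect-function}.
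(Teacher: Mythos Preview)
Your proposal is correct and follows essentially the same route as the paper: apply the It\^{o}--L\'{e}vy isometry together with Cauchy--Schwarz, use \ref{A3}--\ref{A4} to reduce to the $L^2(\bar{\Omega}\times(0,T);L^2(D))$-convergence of $\bar{v}_{\Delta t}$ to $u_*$, and conclude from Lemma~\ref{lem:convergence-1}. The only cosmetic differences are that the paper invokes the dominated convergence theorem in $t$ (after recording both the pointwise-in-$t$ limit and a uniform bound) where you simply integrate the $t$-independent bound over $[0,T]$, and that you spell out the estimate $\|\bar{v}_{\Delta t}-v_{\Delta t}\|_{L^2}^2\le C\Delta t$ which the paper absorbs into the sentence ``the same holds for $\bar{v}_{\Delta t}$''.
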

\begin{proof}
By using \ref{A3}-\ref{A4}, we observe that for any $\phi\in L^2(D)$,
\begin{align*}
 & \bar{\mathbb{E}}\Big[\int_0^t \int_{|z|>0} \Big|\Big( \eta(\bar{v}_{\Delta t}(s);z)- \eta(u_*(s-);z), \phi\Big)_{L^2(D)}\Big|^2\, m({\rm d}z)\,{\rm d}s\Big] \\
 & \le \|\phi\|_{L^2(D)}^2 \bar{\mathbb{E}}\Big[ \int_0^t \int_{|z|>0} \|\eta(\bar{v}_{\Delta t}(s);z)- \eta(u_*(s-);z)\|_{L^2(D)}^2\,m({\rm d}z)\,{\rm d}s\Big] \\
 & \le C \|\phi\|_{L^2(D)}^2 \bar{\mathbb{E}}\Big[\int_0^T \|\bar{v}_{\Delta t}(s)- u_*(s-)\|_{L^2(D)}^2\,{\rm d}s\Big]. 
\end{align*}
Note that by ${\rm i)}$ of Lemma \ref{lem:convergence-1}, $u_{\Delta t}^* \goto u_*$ in $L^2(\bar{\Omega},L^2(0,T;L^2(D)))$, and the same holds for $\bar{v}_{\Delta t}$.
Hence
\begin{align}
\lim_{\Delta t \goto 0} \bar{\mathbb{E}}\Big[\int_0^t \int_{|z|>0} \Big|\Big( \eta(\bar{v}_{\Delta t}(s);z)- \eta(u_*(s-);z), \phi\Big)_{L^2(D)}\Big|^2\, m({\rm d}z)\,{\rm d}s\Big]=0.\label{eq:limit-1-martingale}
\end{align}
Moreover, by the assumptions \ref{A3}-\ref{A4} and \eqref{esti:l2-new} along with the fact that \\ $u_*\in L^2(\bar{\Omega}; L^\infty(0,T;L^2(D)))$, we have
\begin{align}
 \bar{\mathbb{E}}\Big[\int_0^t \int_{|z|>0} \Big|\Big( \eta(\bar{v}_{\Delta t}(s);z)- \eta(u_*(s-);z), \phi\Big)_{L^2(D)}\Big|^2\, m({\rm d}z)\,{\rm d}s\Big]\le C \label{esti:limit-1-martingale}
\end{align}
for some constant $C>0$. Furthermore, by using the properties of the stochastic integral with respect to the compensated Poisson random measure and the fact that 
$N_{\Delta t}^*=N_*$, we have 
\begin{align*}
 & \bar{\mathbb{E}}\Big[\Big|\int_0^t \int_{|z|>0} \Big( \eta(\bar{v}_{\Delta t}(s);z)- \eta(u_*(s-);z), \phi\Big)_{L^2(D)}\widetilde{N}_*({\rm d}z,{\rm d}s)\Big|^2\Big] \\
 & = \bar{\mathbb{E}}\Big[\int_0^t \int_{|z|>0} \Big|\Big( \eta(\bar{v}_{\Delta t}(s);z)- \eta(u_*(s-);z), \phi\Big)_{L^2(D)}\Big|^2\, m({\rm d}z)\,{\rm d}s\Big].
\end{align*}
Therefore by \eqref{eq:limit-1-martingale} and \eqref{esti:limit-1-martingale}, we have for all $\phi\in L^2(D)$
\begin{align*}
 & \lim_{\Delta t \goto 0}\bar{\mathbb{E}}\Big[\Big|\int_0^t \int_{|z|>0} \Big( \eta(\bar{v}_{\Delta t}(s);z)- \eta(u_*(s-);z), \phi\Big)_{L^2(D)}\widetilde{N}_*({\rm d}z,{\rm d}s)\Big|^2\Big]=0 \\
 &\text{and}, \quad \bar{\mathbb{E}}\Big[\Big|\int_0^t \int_{|z|>0} \Big( \eta(\bar{v}_{\Delta t}(s);z)- \eta(u_*(s-);z), \phi\Big)_{L^2(D)}\widetilde{N}_*({\rm d}z,{\rm d}s)\Big|^2\Big]\le C.
\end{align*}
Thus, one can use dominated convergence theorem to conclude 
\begin{align*}
 \lim_{\Delta t \goto 0} \bar{\mathbb{E}}\Big[\int_0^T \Big|\int_0^t \int_{|z|>0} \Big( \eta(\bar{v}_{\Delta t}(s);z)- \eta(u_*(s-);z), \phi\Big)_{L^2(D)}\widetilde{N}_*({\rm d}z,{\rm d}s)\Big|^2\,{\rm d}t\Big]=0
\end{align*}
for any $\phi \in L^2(D)$. Since $W_0^{1,p}(D)\subset L^2(D)$, \eqref{limit-martingale} holds true for all $\phi \in W_0^{1,p}(D)$. This finishes the proof.
\end{proof}
Define the piecewise affine function 
\begin{align}
 b_{\Delta t}^*(t): = \sum_{k=0}^{N-1}\Big( \frac{B_{\Delta t}^*(t_{k+1})-B_{\Delta t}^*(t_k)}{\Delta t}(t-t_k)
   + B_{\Delta t}^*(t_k)\Big) {\bf 1}_{[t_k,t_{k+1})}(t), \quad t\in [0,T]. \notag 
\end{align}
\begin{lem}\label{lem:conv-martingale-2}
 We have 
 \begin{align*}
 \bar{\mathbb{E}}\Big[\int_0^T \|B_{\Delta t}^*(t) -b_{\Delta t}^*(t)\|_{L^2(D)}^2\,{\rm d}t\Big]\le C \Delta t.
 \end{align*}
\end{lem}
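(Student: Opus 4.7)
The plan is to work interval by interval. On each $[t_k,t_{k+1})$, the definition of the piecewise affine interpolant gives
\[
B_{\Delta t}^*(t)-b_{\Delta t}^*(t) = \bigl[B_{\Delta t}^*(t)-B_{\Delta t}^*(t_k)\bigr]-\frac{t-t_k}{\Delta t}\bigl[B_{\Delta t}^*(t_{k+1})-B_{\Delta t}^*(t_k)\bigr],
\]
so since $\frac{t-t_k}{\Delta t}\in[0,1]$, the elementary inequality $(a-\lambda b)^2\le 2a^2+2b^2$ yields
\[
\|B_{\Delta t}^*(t)-b_{\Delta t}^*(t)\|_{L^2(D)}^2 \le 2\|B_{\Delta t}^*(t)-B_{\Delta t}^*(t_k)\|_{L^2(D)}^2+2\|B_{\Delta t}^*(t_{k+1})-B_{\Delta t}^*(t_k)\|_{L^2(D)}^2.
\]

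Next I would estimate each piece via the It\^{o}-L\'{e}vy isometry applied to $B_{\Delta t}^*$. For $s\le t$ in $[t_k,t_{k+1}]$,
\[
\bar{\mathbb{E}}\|B_{\Delta t}^*(t)-B_{\Delta t}^*(s)\|_{L^2(D)}^2 = \bar{\mathbb{E}}\int_s^t\!\!\int_{|z|>0}\|\eta(\bar v_{\Delta t}(r);z)\|_{L^2(D)}^2\,m({\rm d}z)\,{\rm d}r.
\]
Using \ref{A3} with $\eta(0;z)=0$, namely $|\eta(u;z)|\le \lambda^*|u|(1\wedge|z|)$, together with \ref{A4} and the uniform bound \eqref{esti:l2-new} (which transfers to $\bar v_{\Delta t}$ via its definition in \eqref{defi:discrete-variables-new-prob-space} and the analogue of \eqref{a-prioriestimate:1} on the new probability space), one bounds the inner integrand by $(\lambda^*)^2\|\bar v_{\Delta t}(r)\|_{L^2(D)}^2(1\wedge|z|^2)$. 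Hence
\[
\bar{\mathbb{E}}\|B_{\Delta t}^*(t)-B_{\Delta t}^*(s)\|_{L^2(D)}^2 \le (\lambda^*)^2 c_\eta (t-s)\,\bar{\mathbb{E}}\Bigl[\sup_{r\in[0,T]}\|\bar v_{\Delta t}(r)\|_{L^2(D)}^2\Bigr]\le C(t-s).
\]
Applying this with $s=t_k$ and with $(s,t)=(t_k,t_{k+1})$, each of the two terms on the right of the pointwise inequality above is bounded in expectation by $C\,\Delta t$, so
\[
\bar{\mathbb{E}}\|B_{\Delta t}^*(t)-b_{\Delta t}^*(t)\|_{L^2(D)}^2 \le C\,\Delta t\qquad \text{for every }t\in[t_k,t_{k+1}).
\]

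Finally, integrating in $t$ over $[0,T]$ (which is a sum of $N$ intervals of length $\Delta t$, with integrand uniformly bounded by $C\Delta t$ in expectation) and using Fubini gives the claimed bound $C\,\Delta t$. I do not expect any genuine obstacle here: the estimate is essentially the martingale analogue of the standard $O(\Delta t)$ error between a c\`{a}dl\`{a}g process and its piecewise affine interpolation at a uniform mesh, and all required ingredients (It\^{o}-L\'{e}vy isometry, \ref{A3}-\ref{A4}, and the uniform $L^2$ bound on $\bar v_{\Delta t}$) are in place.
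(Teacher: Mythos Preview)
Your proof is correct and follows essentially the same route as the paper: work on each subinterval $[t_k,t_{k+1})$, split the difference $B_{\Delta t}^*(t)-b_{\Delta t}^*(t)$ via the interpolation formula, apply the It\^{o}--L\'{e}vy isometry together with \ref{A3}--\ref{A4} and the uniform $L^2$ bound on $\bar v_{\Delta t}$, and then sum over $k$. The only cosmetic difference is that you keep both pieces $\|B_{\Delta t}^*(t)-B_{\Delta t}^*(t_k)\|^2$ and $\|B_{\Delta t}^*(t_{k+1})-B_{\Delta t}^*(t_k)\|^2$, whereas the paper bounds the whole thing directly by $2\|\int_{t_k}^{t_{k+1}}\eta\,\widetilde N_*\|_{L^2(D)}^2$ (a step that is really justified only after taking expectations); your version is in fact a bit cleaner on this point.
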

\begin{proof}
 Note that for any $t\in [t_k, t_{k+1})$, we have $\bar{\mathbb{P}}$-a.s.,
 \begin{align}
  &\|B_{\Delta t}^*(t) -b_{\Delta t}^*(t)\|_{L^2(D)}^2 \notag \\
  & = \big\| \int_{t_k}^{t} \int_{|z|>0} \eta(\bar{v}_{\Delta t}(s;z))\widetilde{N}_*({\rm d}z,{\rm d}s) - \frac{t-t_k}{\Delta t} \int_{t_k}^{t_{k+1}}
  \int_{|z|>0} \eta(\bar{v}_{\Delta t}(s;z))\widetilde{N}_*({\rm d}z,{\rm d}s)\big\|_{L^2(D)}^2 \notag  \\
  & \le 2 \big\| \int_{t_k}^{t_{k+1}}\int_{|z|>0} \eta(\bar{v}_{\Delta t}(s);z)\widetilde{N}_*({\rm d}z,{\rm d}s)\big\|_{L^2(D)}^2. \label{esti:a1}
 \end{align}
 Thanks to It\^{o}-L\'{e}vy isometry, the assumption \ref{A3}, and the estimate \eqref{esti:l2-new} along with \eqref{esti:a1}, we obtain 
 \begin{align*}
   & \bar{\mathbb{E}}\Big[\int_0^T \|B_{\Delta t}^*(t) -b_{\Delta t}^*(t)\|_{L^2(D)}^2\,{\rm d}t\Big] \notag \\
 &= \sum_{k=0}^{N-1}  \bar{\mathbb{E}}\Big[\int_{t_k}^{t_{k+1}}\|B_{\Delta t}^*(t) -b_{\Delta t}^*(t)\|_{L^2(D)}^2\,{\rm d}t\Big] \\
 & \le 2 \sum_{k=0}^{N-1} \int_{t_k}^{t_{k+1}} \bar{\mathbb{E}}\Big[\| \int_{t_k}^{t_{k+1}}\int_{|z|>0} \eta(\bar{v}_{\Delta t}(s);z)\widetilde{N}_*({\rm d}z,{\rm d}s)\|_{L^2(D)}^2\Big]\,{\rm d}t \\
 & \le 2 \sum_{k=0}^{N-1} \int_{t_k}^{t_{k+1}} \bar{\mathbb{E}}\Big[ \int_{t_k}^{t_{k+1}}\int_{|z|>0} \|\bar{v}_{\Delta t}(s)\|_{L^2(D)}^2\,(1\wedge |z|^2)\,m({\rm d}z)\,{\rm d}s\Big]\,{\rm d}t \\
 & \le C \Delta t \,\bar{\mathbb{E}}\Big[ \int_{0}^{T}\|\bar{v}_{\Delta t}(s)\|_{L^2(D)}^2\,{\rm d}s\Big] \le C \Delta t.
 \end{align*}
 This completes the proof.
\end{proof}
\begin{lem} \label{lem:convergence-allterm}
 The following holds: for all $\phi\in W_0^{1,p}(D)$
 \begin{align}
 &\lim_{\Delta t\goto 0} \bar{\mathbb{E}}\Big[\big|\big(u_{\Delta t}^*(0)-u_*(0),\phi\big)_{L^2(D)}\big|\Big]=0\,,\label{eq:limit-0} \\
 & \lim_{\Delta t \goto 0} \bar{\mathbb{E}}\Big[ \int_0^T \Big|\big\langle b_{\Delta t}^*(t), \phi \big\rangle- \big\langle \int_0^t \int_{|z|>0} \eta(u_*(s-,\cdot);z) \widetilde{N}_*({\rm d}z,{\rm d}s),
  \phi \big\rangle\Big|\,{\rm d}t\Big]=0\,, \label{eq:limit-1} \\
  & \lim_{\Delta t \goto 0} \bar{\mathbb{E}}\Big[ \int_0^T \Big| \int_0^t \big\langle {\rm div}_x \big( \vec{f}(\bar{v}_{\Delta t}(s))- \vec{f}(u_*(s))\big), \phi\big\rangle\, {\rm d}s \Big|\,{\rm d}t\Big]=0\,, 
  \label{eq:limit-2}
 \end{align}
 and there exists $G\in L^{p^\prime}(\bar{\Omega}\times D_T)^d$ such that 
 \begin{align}
 \lim_{\Delta t \goto 0} \bar{\mathbb{E}}\Big[ \int_0^T \Big| \int_0^t \big\langle {\rm div}_x \big(|\grad v_{\Delta t}(s)|^{p-2}\grad v_{\Delta t}(s) - G(s)\big), 
 \phi\big\rangle\, {\rm d}s \Big|\,{\rm d}t\Big]=0\,.\label{eq:limit-3}
\end{align}
\end{lem}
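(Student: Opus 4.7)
We prove \eqref{eq:limit-0}--\eqref{eq:limit-3} in order; the first three are relatively direct consequences of the preceding lemmas, while \eqref{eq:limit-3} is the delicate one. For \eqref{eq:limit-0}, note that by construction $u_{\Delta t}^*(0)=v_0=u_{0,\Delta t}+U_{\Delta t}^*$, while $u_*(0)$ is identified with $u_0+U_*$ (the map $u\mapsto u(0)$ is continuous on $\mathbb{D}([0,T];L^2_w(D))$, a component of $\mathcal{Z}$). Since $u_{0,\Delta t}\to u_0$ in $L^2(D)$ deterministically and $U_{\Delta t}^*\to U_*$ weakly in $W^{1,p}(D)$ $\bar{\mathbb{P}}$-a.s.\ by Proposition~\ref{prop:skorokhod-representation}, the pairing $(u_{\Delta t}^*(0)-u_*(0),\phi)_{L^2(D)}$ goes to $0$ $\bar{\mathbb{P}}$-a.s.\ for every $\phi\in W_0^{1,p}(D)\hookrightarrow L^2(D)$; the uniform $L^2(\bar{\Omega})$ bound on $u_{\Delta t}^*(0)$ coming from \eqref{esti:initial-approx} and the hypothesis on $U$ then upgrades this to $L^1(\bar{\Omega})$-convergence via uniform integrability.

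For \eqref{eq:limit-1}, I split
\[
b_{\Delta t}^*(t)-\int_0^t\!\!\int_{|z|>0}\eta(u_*(s-);z)\widetilde{N}_* = \big(b_{\Delta t}^*-B_{\Delta t}^*\big)(t)+\Big(B_{\Delta t}^*(t)-\int_0^t\!\!\int_{|z|>0}\eta(u_*(s-);z)\widetilde{N}_*\Big),
\]
pair with $\phi$, apply Cauchy--Schwarz to reduce to $L^2(D)$-norms, and close by Lemma~\ref{lem:conv-martingale-2} (first piece, rate $O(\sqrt{\Delta t})$) and Lemma~\ref{lem:conv-martingale-1} (second piece). For \eqref{eq:limit-2}, integration by parts gives $\langle{\rm div}_x(\vec{f}(\bar{v}_{\Delta t})-\vec{f}(u_*)),\phi\rangle=-\int_D(\vec{f}(\bar{v}_{\Delta t})-\vec{f}(u_*))\cdot\nabla\phi\,{\rm d}x$; Lipschitz continuity of $\vec{f}$ (assumption \ref{A2}) bounds this by $C\|\bar{v}_{\Delta t}-u_*\|_{L^2(D)}\|\nabla\phi\|_{L^2(D)}$, and the $L^2(\bar{\Omega};L^2(D_T))$-convergence of $\bar{v}_{\Delta t}$ to $u_*$ (proved in the same fashion as Lemma~\ref{lem:convergence-1}(ii), using the trivial estimate $\|v_{\Delta t}-\bar{v}_{\Delta t}\|_{L^2(\bar{\Omega}\times D_T)}^2\le C\Delta t$) closes the argument after Cauchy--Schwarz in $s,t$.

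The main obstacle is \eqref{eq:limit-3}: a direct weak-compactness argument at the level of the time primitive is inadequate, since weak convergence in $L^{p^\prime}(\bar{\Omega}\times(0,T))$ combined with Hölder equicontinuity in $t$ does not imply strong $L^1$-convergence in the full $(\bar{\omega},t)$ variables (compactness in $\bar{\omega}$ is lost). My strategy is instead to identify $G$ and extract the convergence from the equation itself. From the $L^p$-bound on $\nabla v_{\Delta t}$ (an analogue of \eqref{esti:lp-grad-new}), the flux $|\nabla v_{\Delta t}|^{p-2}\nabla v_{\Delta t}$ is bounded in $L^{p^\prime}(\bar{\Omega}\times D_T)^d$, so Banach--Alaoglu yields a (non-relabeled) subsequence converging weakly to some $G\in L^{p^\prime}(\bar{\Omega}\times D_T)^d$. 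Transferring the time-continuous identity \eqref{eq:time-cont-p-laplace} to the Skorokhod space via $\phi_{\Delta t}$ gives
\[
u_{\Delta t}^*(t)=v_0+\int_0^t{\rm div}_x\big(|\nabla v_{\Delta t}|^{p-2}\nabla v_{\Delta t}+\vec{f}(v_{\Delta t})\big){\rm d}s+b_{\Delta t}^*(t),
\]
and pairing with $\phi$ and rearranging, with $R_{\Delta t}(t):=\int_0^t\langle{\rm div}_x(|\nabla v_{\Delta t}|^{p-2}\nabla v_{\Delta t}),\phi\rangle\,{\rm d}s$,
\[
R_{\Delta t}(t)=\langle u_{\Delta t}^*(t)-v_0,\phi\rangle-\int_0^t\langle{\rm div}_x \vec{f}(v_{\Delta t}),\phi\rangle\,{\rm d}s-\langle b_{\Delta t}^*(t),\phi\rangle.
\]
Every term on the right converges in $L^1(\bar{\Omega}\times(0,T))$: the first by Lemma~\ref{lem:convergence-1}(i) and \eqref{eq:limit-0}, the second by the $v_{\Delta t}$-variant of \eqref{eq:limit-2}, and the third by \eqref{eq:limit-1}. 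Thus $R_{\Delta t}\to R_\infty$ strongly in $L^1(\bar{\Omega}\times(0,T))$. Simultaneously, the weak convergence $|\nabla v_{\Delta t}|^{p-2}\nabla v_{\Delta t}\rightharpoonup G$, tested against functions of the form $\chi(\bar{\omega})\mathbf{1}_{[0,t]}(s)\nabla\phi(x)$ (and Fubini), gives $R_{\Delta t}\rightharpoonup\int_0^t\langle{\rm div}_x G,\phi\rangle\,{\rm d}s$ weakly in $L^{p^\prime}(\bar{\Omega}\times(0,T))$. Uniqueness of the limit forces $R_\infty(t,\bar{\omega})=\int_0^t\langle{\rm div}_x G,\phi\rangle\,{\rm d}s$, and subtracting gives exactly \eqref{eq:limit-3}.
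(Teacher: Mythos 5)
Your proofs of \eqref{eq:limit-0}--\eqref{eq:limit-2} are correct and follow essentially the same route as the paper: \eqref{eq:limit-0} via a.s.\ weak-$L^2(D)$ convergence of $u_{\Delta t}^*(0)$ plus uniform integrability (the paper argues through the convergence in $\mathbb{D}([0,T];L^2_w(D))$ and right-continuity at $t=0$, you argue through the explicit form $u_{\Delta t}^*(0)=u_{0,\Delta t}+U_{\Delta t}^*$ — both work and are in fact used together in the paper's Step~i)); \eqref{eq:limit-1} via the same splitting into $b_{\Delta t}^*-B_{\Delta t}^*$ and $B_{\Delta t}^*-\int_0^t\int_{|z|>0}\eta(u_*(s-);z)\widetilde{N}_*$ handled by Lemmas \ref{lem:conv-martingale-2} and \ref{lem:conv-martingale-1}; \eqref{eq:limit-2} via Lipschitz continuity of $\vec{f}$ and the strong $L^2(\bar{\Omega}\times D_T)$ convergence of $\bar{v}_{\Delta t}$.

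Where you genuinely diverge is \eqref{eq:limit-3}, and your version is the more careful one. The paper extracts $G$ exactly as you do (uniform $L^{p^\prime}$ bound on $|\grad v_{\Delta t}|^{p-2}\grad v_{\Delta t}$ plus Banach--Alaoglu) but then simply asserts that the weak convergence $|\grad v_{\Delta t}|^{p-2}\grad v_{\Delta t}\rightharpoonup G$ in $L^{p^\prime}(\bar{\Omega}\times D_T)^d$ makes \eqref{eq:limit-3} ``easy to conclude.'' As you correctly point out, weak convergence only yields $\bar{\mathbb{E}}\big[\chi\,R_{\Delta t}(t)\big]\goto\bar{\mathbb{E}}\big[\chi\int_0^t\langle\mbox{div}_x G,\phi\rangle\,{\rm d}s\big]$ for fixed $t$ and test variables $\chi$, i.e.\ weak convergence of the primitives, not the strong $L^1(\bar{\Omega}\times[0,T])$ convergence of their absolute values demanded by \eqref{eq:limit-3}; equicontinuity in $t$ cannot compensate for the absence of compactness in $\bar{\omega}$. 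Your fix — writing $R_{\Delta t}(t)$ from the discrete equation as $\langle u_{\Delta t}^*(t)-u_{\Delta t}^*(0),\phi\rangle-\int_0^t\langle\mbox{div}_x\vec{f}(v_{\Delta t}),\phi\rangle\,{\rm d}s-\langle b_{\Delta t}^*(t),\phi\rangle$, noting that each of these converges strongly in $L^1(\bar{\Omega}\times[0,T])$ by the limits already established, and then identifying the strong limit with $\int_0^t\langle\mbox{div}_x G,\phi\rangle\,{\rm d}s$ through the weak convergence — is sound, non-circular (it only invokes \eqref{eq:limit-0}--\eqref{eq:limit-2} and Lemma \ref{lem:convergence-1}, all proved beforehand), and supplies exactly the ingredient the paper's one-line conclusion leaves implicit. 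The only cosmetic remark is that the drift in \eqref{eq:discrete-p-laplace-new-prob-space} involves $v_{\Delta t}$ rather than $\bar{v}_{\Delta t}$, so you do need the $v_{\Delta t}$-variant of \eqref{eq:limit-2}, which you note and which follows from Lemma \ref{lem:convergence-1}~${\rm ii)}$.
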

\begin{proof} We prove \eqref{eq:limit-0}-\eqref{eq:limit-3} step by step.
\vspace{.1cm}

\noindent{\em Proof of \eqref{eq:limit-0}:}
Note that $\bar{\mathbb{P}}$-a.s., $u_{\Delta t}^* \goto u_*$ in $\mathbb{D}([0,T]; L^2_w(D))$ and $u_*$ is right continuous at $t=0$. Thus, for any $\phi\in W_0^{1,p}(D)$ there holds
$\bar{\mathbb{P}}$-a.s.,
$\big(u_{\Delta t}^*(0), \phi\big)_{L^2(D)}\goto \big(u_*(0), \phi\big)_{L^2(D)} $. Therefore, one can use \eqref{esti:l2-new} and Vitali theorem to conclude \eqref{eq:limit-0}.
\vspace{.1cm}

\noindent{\em Proof of \eqref{eq:limit-1}:}
Notice that, for any $\phi\in W_0^{1,p}(D)$
\begin{align*}
& \big\langle b_{\Delta t}^*(t), \phi \big\rangle- \big\langle \int_0^t \int_{|z|>0} \eta(u_*(s-,\cdot);z) \widetilde{N}_*({\rm d}z,{\rm d}s), \phi \big\rangle \\
&=\Big\langle b_{\Delta t}^*(t)- B_{\Delta t}^*(t) + B_{\Delta t}^*(t)-\int_0^t \int_{|z|>0} \eta(u_*(s-,\cdot);z) \widetilde{N}_*({\rm d}z,{\rm d}s), \phi \Big\rangle \\
& \le \|\phi\|_{W_0^{1,p}(D)}\| b_{\Delta t}^*(t)- B_{\Delta t}^*(t)\|_{W^{-1,p}(D)} + \Big| \Big\langle B_{\Delta t}^*(t)-\int_0^t \int_{|z|>0} \eta(u_*(s-,\cdot);z)
\widetilde{N}_*({\rm d}z,{\rm d}s), \phi \Big\rangle \Big| \\
& \le C\|\phi\|_{W_0^{1,p}(D)}\| b_{\Delta t}^*(t)- B_{\Delta t}^*(t)\|_{L^2(D)} \\
& \hspace{2cm}+ \Big| \int_0^t \int_{|z|>0} \big\langle \eta(\bar{v}_{\Delta t}(s,\cdot);z)- \eta(u_*(s-,\cdot);z), \phi \big\rangle
\widetilde{N}_*({\rm d}z,{\rm d}s) \Big|.
\end{align*}
One can use Lemmas \ref{lem:conv-martingale-1} and \ref{lem:conv-martingale-2} to arrive at \eqref{eq:limit-1}.
\vspace{.1cm}

\noindent{\em Proof of \eqref{eq:limit-2}:} Since $\vec{f}$ is Lipschitz continuous, we have
\begin{align*}
 \bar{\mathbb{E}}\Big[ \int_0^T \Big| \int_0^t \big\langle {\rm div}_x \big( \vec{f}(\bar{v}_{\Delta t}(s))- \vec{f}(u_*(s))\big), \phi\big\rangle\, {\rm d}s \Big|\,{\rm d}t\Big] 
 \le C \|\phi\|_{W_0^{1,p}(D)} \| \bar{v}_{\Delta t}- u_*\|_{L^2(\bar{\Omega}\times D_T)}\,,
\end{align*}
and hence \eqref{eq:limit-2} holds by recalling that $\bar{v}_{\Delta t}\rightarrow u_*$ in $L^2(\bar{\Omega}\times D_T)$.
\vspace{.1cm}

\noindent{\em Proof of \eqref{eq:limit-3}:} Thanks to $\rm ii)$ of Lemma \ref{lem:convergence-1} and 
the estimate \eqref{esti:lp-grad-new}, there exists a not relabeled subsequence of $\big\{v_{\Delta t}\big\}$ such that 
$\grad v_{\Delta t}\rightharpoonup \grad u_*$ in $L^p(\bar{\Omega}\times D_T)^d$ for $\Delta t \goto 0$.  
Moreover, since $\big| |\grad v_{\Delta t}|^{p-2}\grad v_{\Delta t}\big|^{p^\prime}=|\grad v_{\Delta t}|^p$, there exists
$G\in L^{p^\prime}(\bar{\Omega}\times D_T)^d$ such that $
 |\grad v_{\Delta t}|^{p-2}\grad v_{\Delta t} \rightharpoonup G $ in $L^{p^\prime}(\bar{\Omega}\times D_T)^d $
for the same subsequence and $\Delta t \goto 0$. Thus, it is easy to conclude that for any $\phi\in W_0^{1,p}(D)$
\begin{align*}
 \lim_{\Delta t \goto 0} \bar{\mathbb{E}}\Big[ \int_0^T \Big| \int_0^t \big\langle {\rm div}_x \big(|\grad v_{\Delta t}|^{p-2}\grad v_{\Delta t}(s)
 - G(s)\big), \phi\big\rangle\, {\rm d}s \Big|\,{\rm d}t\Big]=0\,,
\end{align*}
i.e., \eqref{eq:limit-3} holds true. 
This completes the proof. 
\end{proof}
\subsection{Proof of Theorem \ref{thm:existence-weak}}\label{subsec:existence}
In this subsection, we use Lemmas \ref{lem:convergence-1} and \ref{lem:convergence-allterm} to prove existence
of a weak solution of \eqref{eq:p-laplace} in the sense of Definition \ref{defi:weak-solun} in three steps. Moreover, we show path-wise uniqueness of weak solutions
of the problem \eqref{eq:p-laplace} with respect to the same stochastic basis and a given control.
\vspace{.1cm}

\noindent{\bf Step ${\rm i)}$:} We define the functionals for all $\phi\in W_0^{1,p}(D)$,
\begin{equation*}
\begin{aligned}
 \mathcal{K}_{\Delta t}(\tilde{u}_{\Delta t}, U, \widetilde{N}; \phi) &= \big( \hat{u}_0, \phi\big)_{L^2(D)} + 
 \int_0^t \big\langle \mbox{div}_x \big(|\grad u_{\Delta t}|^{p-2}\grad u_{\Delta t}(s)\big) , \phi\big\rangle \,{\rm d}s \\
 & \hspace{2cm} + \int_0^t \big\langle \mbox{div}_x \vec{f}(u_{\Delta t}(s)), \phi\big\rangle \,{\rm d}s
 + \big\langle \tilde{B}_{\Delta t}(t), \phi \big\rangle\,, \\
 \mathcal{K}_{\Delta t}^*(u_{\Delta t}^*, U_{\Delta t}^*, \widetilde{N}_{\Delta t}^*; \phi) &= \big( u_{\Delta t}^*(0), \phi\big)_{L^2(D)} + 
 \int_0^t \big\langle \mbox{div}_x \big(|\grad v_{\Delta t}|^{p-2}\grad v_{\Delta t}(s)\big) , \phi\big\rangle \,{\rm d}s \\ 
 & \hspace{2cm}+ \int_0^t \big\langle \mbox{div}_x\vec{f}(v_{\Delta t}(s)),  \phi\big\rangle \,{\rm d}s +  \big\langle b_{\Delta t}^*(t), \phi \big\rangle\,, \\
 \mathcal{K}_*(u_*, U_*, \widetilde{N}_*; \phi) &= \big( u_*(0), \phi\big)_{L^2(D)} + \int_0^t \big\langle \mbox{div}_x \big(G(s) + \vec{f}(u_*(s))\big), \phi\big\rangle \,{\rm d}s \\
 & \hspace{2cm} + \Big\langle \int_0^t \int_{|z|>0} \eta(u_*(s-,\cdot);z) \widetilde{N}_*({\rm d}z,{\rm d}s), \phi \Big\rangle\,.
\end{aligned}
\end{equation*}
In view of Lemma \ref{lem:convergence-allterm}, we conclude that 
\begin{align}
 \lim_{\Delta t \goto 0} \big\|\mathcal{K}_{\Delta t}^*(u_{\Delta t}^*, U_{\Delta t}^*, \widetilde{N}_{\Delta t}^*; \phi)- \mathcal{K}_*(u_*,U_*, \widetilde{N}_*; \phi)
 \big\|_{L^1(\bar{\Omega}\times[0,T])}=0. \label{eq:final-0-martingale-solun}
\end{align}
Thanks to the definition of $\mathcal{K}_{\Delta t}(\tilde{u}_{\Delta t}, U, \widetilde{N}; \phi)$ and the equality \eqref{eq:time-cont-p-laplace},
we have: $\bar{\mathbb{P}}$-a.s., \\
 $ \big( \tilde{u}_{\Delta t}(t), \phi \big)_{L^2(D)}= \mathcal{K}_{\Delta t}(\tilde{u}_{\Delta t}, U, \widetilde{N}; \phi) $ for all  $t\in [0,T]$.
More precisely,
\begin{align*}
 \bar{\mathbb{E}}\Big[ \int_0^T \Big| \big( \tilde{u}_{\Delta t}(t), \phi \big)_{L^2(D)}- \mathcal{K}_{\Delta t}(\tilde{u}_{\Delta t}, \widetilde{N}; \phi)\Big|\,{\rm d}t\Big]=0\,.
\end{align*}
Since $\mathcal{L}(u_{\Delta t}^*,U_{\Delta t}^*, N_{\Delta t}^*)=\mathcal{L}(\tilde{u}_{\Delta t},U_{\Delta t}, N_{\Delta t})$ with $N_{\Delta t}({\rm d}z,{\rm d}t)=N({\rm d}z,{\rm d}t)$ and $U_{\Delta t}=U$ for all 
$\Delta t >0$, we directly have 
\begin{align}
 \int_0^T \bar{\mathbb{E}} \Big[ \Big|\big( u_{\Delta t}^*(t), \phi \big)_{L^2(D)} - \mathcal{K}_{\Delta t}^*(u_{\Delta t}^*, U_{\Delta t}^*, \widetilde{N}_{\Delta t}^*; \phi)\Big|
 \Big]\,{\rm d}t=0\,.\label{eq:final-1-martingale-solun}
\end{align}
Again, thanks to $\rm i)$ of Lemma \ref{lem:convergence-1}, we see that 
\begin{align}
 \lim_{\Delta t \goto 0} \big\| \big(u_{\Delta t}^*(\cdot), \phi \big)_{L^2(D)} - \big( u_*(\cdot), \phi \big)_{L^2(D)}\big\|_{L^1(\bar{\Omega}\times 
 [0,T])}=0\,. \label{eq:final-2-martingale-solun}
\end{align}
We combine \eqref{eq:final-0-martingale-solun}-\eqref{eq:final-2-martingale-solun} to conclude that $\bar{\mathbb{P}}$-a.s.,
for a.e. $t\in [0,T]$, and $\phi\in W_0^{1,p}(D)$
\begin{align*}
 \big( u_*(t), \phi \big)_{L^2(D)}= \big( u_*(0), \phi \big)_{L^2(D)} +  \int_0^t \big\langle \mbox{div}_x \big(G(s) + \vec{f}(u_*(s))\big), \phi\big\rangle \,{\rm d}s \notag  \\
  + \Big\langle \int_0^t \int_{|z|>0} \eta(u_*(s-,\cdot);z) \widetilde{N}_*({\rm d}z,{\rm d}s), \phi \Big\rangle\,.
\end{align*}
Note that, $u_{\Delta t}^*(0)= u_{0,\Delta t} + U_{\Delta t}^*$ and $\bar{\mathbb{P}}$-a.s., $U_{\Delta t}^* \goto U_*$ in $\mathbb{X}_U$. Since $u_{0,\Delta t}\goto u_0$ in $L^2(D)$, by using
$\rm i)$ of Lemma \ref{lem:convergence-1}, we infer that $u_*(0)= u_0 + U_*$. Hence, we obtain
\begin{align}
 \big( u_*(t), \phi \big)_{L^2(D)}= \big( u_0 + U_*, \phi \big)_{L^2(D)} +  \int_0^t \big\langle \mbox{div}_x \big(G(s) + \vec{f}(u_*(s))\big), \phi\big\rangle \,{\rm d}s \notag  \\
  + \Big\langle \int_0^t \int_{|z|>0} \eta(u_*(s-,\cdot);z) \widetilde{N}_*({\rm d}z,{\rm d}s), \phi \Big\rangle\,. \label{eq:p-laplace-new}
\end{align}
Since $u_* \in \mathbb{D}([0,T]; L_w^2(D))$, \eqref{eq:p-laplace-new} holds true for all $t\in [0,T]$ and all $\phi \in W_0^{1,p}(D)$.
\vspace{.1cm}

\noindent{\bf Step ${\rm ii)}$:} We wish to identify the function $G \in L^{p^\prime}(\bar{\Omega}\times D_T)^d$. We take the $L^2$-scalar product with $v_{k+1}$ in \eqref{eq:discrete-p-laplace-new-prob-space}
and use the identity $(a-b)a= \frac{1}{2}\big( |a|^2-|b|^2 + |a-b|^2\big) \quad \forall\, a, b\in \R$ to have 
\begin{align}
& \frac{1}{2} \bar{\mathbb{E}}\Big[ \|v_{k+1}\|_{L^2(D)}^2 - \|v_{k}\|_{L^2(D)}^2 + \|v_{k+1}-v_k\|_{L^2(D)}^2\Big]
 + \Delta t\, \bar{\mathbb{E}}\Big[ \int_{D} |\grad v_{k+1}|^{p-2}\grad v_{k+1}\cdot \grad v_{k+1}\,{\rm d}x\Big]\notag  \\
 &  \qquad - \bar{\mathbb{E}}\Big[ \Big( \int_{t_k}^{t_{k+1}}\int_{|z|>0} \eta(v_k;z)\widetilde{N}_{\Delta t}^*({\rm d}z,{\rm d}t),
 v_{k+1}-v_k\Big)_{L^2(D)}\Big] =0\,. \label{eq:discrete-1-new}
\end{align}
Since 
\begin{align*}
 & - \bar{\mathbb{E}}\Big[ \Big( \int_{t_k}^{t_{k+1}}\int_{|z|>0} \eta(v_k;z)\widetilde{N}_{\Delta t}^*({\rm d}z,{\rm d}t), v_{k+1}-v_k\Big)_{L^2(D)}\Big] \\
 & = -\frac{1}{2}\bar{\mathbb{E}}\Big[ \|v_{k+1}-v_k\|_{L^2(D)}^2\Big]  - \frac{1}{2} \bar{\mathbb{E}}\Big[ \|
 \int_{t_k}^{t_{k+1}}\int_{|z|>0} \eta(v_k;z)\widetilde{N}_{\Delta t}^*({\rm d}z,{\rm d}t)\|_{L^2(D)}^2\Big]   \\
 & \qquad + \frac{1}{2}\bar{\mathbb{E}}\Big[ \big\|\int_{t_k}^{t_{k+1}}\int_{|z|>0} \eta(v_k;z)\widetilde{N}_{\Delta t}^*({\rm d}z,{\rm d}t)-(v_{K+1}-v_k)\big\|_{L^2(D)}^2\Big],
\end{align*} 
by summing over $k=0,1,\cdots, N-1$ in \eqref{eq:discrete-1-new} and using the fact that $v_N=u_{\Delta t}^*(T)$, we get
\begin{align}
 &\frac{1}{2}\bar{\mathbb{E}}\Big[ \|u_{\Delta t}^*(T)\|_{L^2(D)}^2\Big] + \bar{\mathbb{E}}\Big[ \int_{D_T}
 |\grad v_{\Delta t}(t)|^{p-2}\grad v_{\Delta t}(t)\cdot \grad v_{\Delta t}(t)\,{\rm d}x\,{\rm d}t\Big] \notag \\
 & -\frac{1}{2}\sum_{k=0}^{N-1} \bar{\mathbb{E}}\Big[ \|\int_{t_k}^{t_{k+1}}\int_{|z|>0} \eta(v_k;z)\widetilde{N}_{\Delta t}^*({\rm d}z,{\rm d}t)\|_{L^2(D)}^2\Big] 
 \le \frac{1}{2}\bar{ \mathbb{E}}\Big[\|u_{0,\Delta t} + U_{\Delta t}^*\|_{L^2(D)}^2\Big].\label{esti:1-new}
\end{align}
Thanks to It\^{o}-L\'{e}vy isometry, we see that 
\begin{align}
 &\sum_{k=0}^{N-1} \bar{\mathbb{E}}\Big[ \|\int_{t_k}^{t_{k+1}}\int_{|z|>0} \eta(v_k;z)\widetilde{N}_{\Delta t}^*({\rm d}z,{\rm d}t)\|_{L^2(D)}^2\Big] \notag \\
 &= \bar{\mathbb{E}}\Big[ \int_0^T \int_{|z|>0} \|\eta(\bar{v}_{\Delta t}(t);z)\|_{L^2(D)}^2\,m({\rm d}z)\,{\rm d}t\Big].\label{esti:2-new}
\end{align}
Again, an application of It\^{o}-L\'{e}vy formula \cite[similar to Theorem $3.4$]{tudor} to the functional $\|u_*(t)\|_{2}^2$ in \eqref{eq:p-laplace-new} yields
\begin{align}
 &\frac{1}{2}\bar{\mathbb{E}}\Big[ \|u_*(T)\|_{L^2(D)}^2\Big] + \bar{\mathbb{E}}\Big[ \int_{D_T}
 G \cdot \grad u_* \,{\rm d}x\,{\rm d}t\Big]
  -\frac{1}{2} \bar{\mathbb{E}}\Big[ \int_0^T\int_{|z|>0} \|\eta(u_*(s-);z)\|_{L^2(D)}^2\, m({\rm d}z)\,{\rm d}s\Big] \notag \\
  & = \frac{1}{2} \bar{\mathbb{E}}\Big[\|u_0 + U_*\|_{L^2(D)}^2\Big].\label{esti:3-new}
\end{align}
Combining \eqref{esti:1-new}, \eqref{esti:2-new}, and \eqref{esti:3-new} we obtain 
\begin{align*}
 & \frac{1}{2}\bar{\mathbb{E}}\Big[ \|u_{\Delta t}^*(T)\|_{L^2(D)}^2- \|u_*(T)\|_{L^2(D)}^2\Big] 
+ \bar{\mathbb{E}}\Big[ \int_{D_T}|\grad v_{\Delta t}(t)|^{p-2}\grad v_{\Delta t}(t)\cdot \grad v_{\Delta t}(t)\,{\rm d}x\,{\rm d}t\Big] \notag \\
& \quad -\frac{1}{2}\bar{\mathbb{E}}\Big[ \int_0^T \int_{|z|>0} \Big(\|\eta(\bar{v}_{\Delta t}(s);z)\|_{L^2(D)}^2- \|\eta(u_*(s-);z)\|_{L^2(D)}^2\Big)\, m({\rm d}z)\,{\rm d}s\Big] \notag \\
& \le  \bar{\mathbb{E}}\Big[ \int_{D_T}G \cdot \grad u_* \,{\rm d}x\,{\rm d}t\Big] + \frac{1}{2} \Big\{ \bar{ \mathbb{E}}\Big[\|u_{0,\Delta t} + U_{\Delta t}^*\|_{L^2(D)}^2\Big]-
\bar{\mathbb{E}}\Big[\|u_0 + U_*\|_{L^2(D)}^2\Big]\Big\}\,.
\end{align*}
Note that 
\begin{align*}
\begin{cases}
 \displaystyle \liminf_{\Delta t >0} \bar{\mathbb{E}}\Big[ \|u_{\Delta t}^*(T)\|_{L^2(D)}^2- \|u_*(T)\|_{L^2(D)}^2\Big] \ge 0\, , \\
 \bar{ \mathbb{E}}\Big[\|u_{0,\Delta t} + U_{\Delta t}^*\|_{L^2(D)}^2\Big] \goto  \bar{ \mathbb{E}}\Big[\|u_{0} + U_*\|_{L^2(D)}^2\Big]\,,
 \end{cases}
\end{align*}
and thanks to ${\rm ii)}$ of Lemma \ref{lem:convergence-1} along with the assumptions \ref{A3} and \ref{A4}, it follows that 
\begin{align*}
 \bar{\mathbb{E}}\Big[\int_0^T \int_{|z|>0}\|\eta(\bar{v}_{\Delta t}(t);z)\|_{L^2(D)}^2\,m({\rm d}z)\,{\rm d}t\Big] \goto 
 \bar{\mathbb{E}}\Big[\int_0^T \int_{|z|>0}\|\eta(u_*(t-);z)\|_{L^2(D)}^2\,m({\rm d}z)\,{\rm d}t\Big]\,.
\end{align*}
Thus, one arrives at the following inequality 
\begin{align}
 \limsup_{\Delta t >0} \bar{\mathbb{E}}\Big[\int_{D_T}|\grad v_{\Delta t}(t)|^{p-2}\grad v_{\Delta t}(t)\cdot \grad v_{\Delta t}(t)\,{\rm d}x\,{\rm d}t\Big] 
 \le  \bar{\mathbb{E}}\Big[ \int_{D_T} G \cdot \grad u_* \,{\rm d}x\,{\rm d}t\Big]\,. \label{esti:4-new}
\end{align}
We recall that $\grad v_{\Delta t}\rightharpoonup \grad u_*$ in $ L^p(\bar{\Omega}\times D_T)^d$ and $ |\grad v_{\Delta t}|^{p-2}\grad v_{\Delta t} \rightharpoonup G$
in $ L^{p^\prime}(\bar{\Omega}\times D_T)^d$. Since $p>2$, there exists a constant $C>0$, independent of $\Delta t$, such that 
\begin{align}
 & C \limsup_{\Delta t \goto 0} \bar{\mathbb{E}}\Big[\int_{D_T} \big| \grad v_{\Delta t}- \grad u_*\big|^p\,{\rm d}x\,{\rm d}t\Big] \notag \\
 & \le \limsup_{\Delta t \goto 0}\bar{\mathbb{E}}\Big[\int_{D_T} \Big( |\grad v_{\Delta t}|^{p-2}\grad v_{\Delta t} -
 |\grad u_*|^{p-2}\grad u_* \Big)\cdot \grad(v_{\Delta t}-u_*)\,{\rm d}x\,{\rm d}t\Big] \notag \\
 & \le \limsup_{\Delta t >0} \bar{\mathbb{E}}\Big[\int_{D_T}|\grad v_{\Delta t}(t)|^{p-2}\grad v_{\Delta t}(t)\cdot \grad v_{\Delta t}(t)\,{\rm d}x\,{\rm d}t\Big]
 - \bar{\mathbb{E}}\Big[ \int_{D_T} G \cdot \grad u_* \,{\rm d}x\,{\rm d}t\Big] \le 0\,, \notag 
\end{align}
where the last inequality follows from \eqref{esti:4-new}. Therefore, since $ \grad v_{\Delta t}\rightharpoonup \grad u_*$ in $L^p(\bar{\Omega}\times D_T)^d$
we conclude that $ \grad v_{\Delta t}\goto \grad u_*$ in $L^p(\bar{\Omega}\times D_T)^d$, and hence $
  |\grad v_{\Delta t}|^{p-2}\grad v_{\Delta t} \goto  |\grad u_*|^{p-2}\grad u_*$ in $L^{p^\prime}(\bar{\Omega}\times D_T)^d$.
In other words, $G=|\grad u_*|^{p-2}\grad u_*$.
\vspace{.1cm}

\noindent{\bf Step ${\rm iii)}$:} With the identification of $G$, it follows from \eqref{eq:p-laplace-new} that 
the system \\$\bar{\pi}:=\big( \bar{\Omega}, \bar{\mathcal{F}},  \bar{\mathbb{P}}, \bar{\mathbb{F}}, N_*, u_*, U_*\big)$
is a weak solution of the problem \eqref{eq:p-laplace}. Moreover, since $$\mathcal{L}(U_{\Delta t}^*)= \mathcal{L}(U_{\Delta t})~\text{on}~ W^{1,p}(D)~\text{with}~ U_{\Delta t}=U,~\text{and}~~
\mathbb{P}\text{-a.s.,}~ U_{\Delta t}^* \goto U_*~\text{in}~\mathbb{X}_{U},$$
we see that ${\rm i)}$ in Theorem \ref{thm:existence-weak} holds. Furthermore, one can use Proposition \ref{prop:skorokhod-representation} and the 
 estimates \eqref{esti:l2-new}-\eqref{esti:lp-grad-new} to arrive at ${\rm ii)}$, Theorem \ref{thm:existence-weak}. This completes the existence proof. 
 \subsubsection{\bf On path-wise uniqueness of weak solutions:} \label{subsec:uniqueness}
 Let $(\Omega,  \mathcal{F},\mathbb{P}, \mathbb{F}, N, u_1,U)$ and \\ $(\Omega, \mathcal{F}, \mathbb{P}, \mathbb{F}, N, u_2,U)$ 
be two weak solutions of \eqref{eq:p-laplace} with a given control $U$. Let us introduce the convex approximation of the absolute value function. 
Let $\beta:\R \rightarrow \R$ be a $C^\infty$ function satisfying 
\begin{align*}
      \beta(0) = 0\,,\quad \beta(-r)= \beta(r)\,,\quad 
      \beta^\prime(-r) = -\beta^\prime(r)\,,\quad \beta^{\prime\prime} \ge 0\,,
\end{align*} 
and 
\begin{align*}
\beta^\prime(r)=
	\begin{cases}
	-1\quad \text{when} ~ r\le -1\,,\\
         \in [-1,1] \quad\text{when}~ |r|<1\,,\\
         +1 \quad \text{when} ~ r\ge 1\,.
         \end{cases}
\end{align*} 
For any $\vartheta > 0$, define $\beta_\vartheta:\R \rightarrow \R$ by 
$\beta_\vartheta(r) = \vartheta \beta(\frac{r}{\vartheta})$. 
Then
\begin{align}\label{eq:approx to abosx}
 |r|-M_1\vartheta \le \beta_\vartheta(r) \le |r|\quad 
 \text{and} \quad |\beta_\vartheta^{\prime\prime}(r)| 
 \le \frac{M_2}{\vartheta} {\bf 1}_{|r|\le \vartheta}\,,
\end{align} 
where $M_1 = \sup_{|r|\le 1}\big | |r|-\beta(r)\big |$ and 
$M_2 = \sup_{|r|\le 1}|\beta^{\prime\prime} (r)|$.
\vspace{.1cm}

We apply It\^{o}-L\'{e}vy formula to the functional $\int_{D}\beta_\vartheta(u_1(t)-u_2(t))\,{\rm d}x$ and have 
\begin{align}
 & \int_{D}\beta_\vartheta(u_1(t)-u_2(t))\,{\rm d}x \notag \\
 & = - \int_{0}^t \int_{D} \big(|\grad u_1|^{p-2}\grad u_1 - |\grad u_2|^{p-2}\grad u_2\big)
 \cdot \grad(u_1-u_2)(s)\beta_{\vartheta}^{\prime\prime}(u_1-u_2) \,{\rm d}x\,{\rm d}s \notag \\
 & - \int_{0}^t \int_{D}  \big(\vec{f}(u_1(s,x))-\vec{f}(u_2(s,x))\big)
 \cdot \grad(u_1(s,x)-u_2(s,x))\beta_{\vartheta}^{\prime\prime}(u_1-u_2) \,{\rm d}x\,{\rm d}s \notag \\
 & + \int_{0}^{t}\int_{|z|>0}\int_{D}\int_0^1 \beta_\vartheta^{\prime}\Big( (u_1 -u_2)(s-,x) + \lambda 
 \big( \eta(u_1(s-,x);z)-\eta(u_2(s-,x);z)\big)\Big) \notag \\
  & \hspace{3cm }\times \big( \eta(u_1(s-,x);z)-\eta(u_2(s-,x);z)\big)
 \,{\rm d}\lambda\,{\rm d}x\,\widetilde{N}({\rm d}z,{\rm d}s) \notag \\
 & + \int_{0}^{t}\int_{|z|>0}\int_{D}\int_0^1 (1-\lambda)  \beta_\vartheta^{\prime\prime}\Big( (u_1-u_2)(s-,x) + \lambda 
 \big( \eta(u_1(s-,x);z)-\eta(u_2(s-,x);z)\big)\Big) \notag \\
 & \hspace{3cm} \times \big( \eta(u_1(s-,x);z)-\eta(u_2(s-,x);z)\big)^2\,{\rm d}\lambda\,{\rm d}x\,m({\rm d}z)\,{\rm d}s\,. \notag 
\end{align}
Since $p>2$ and $\beta_{\vartheta}^{\prime \prime}\ge 0$, we see that 
\begin{align*}
 & - \big(|\grad u_1|^{p-2}\grad u_1 - |\grad u_2|^{p-2}\grad u_2\big)
 \cdot \grad(u_1-u_2)\beta_{\vartheta}^{\prime\prime}(u_1-u_2) \\
 & \le - C |\grad(u_1 -u_2)|^p\beta_{\vartheta}^{\prime\prime}(u_1-u_2)  \le 0\,,
\end{align*}
and therefore, we obtain 
\begin{align}
 & \mathbb{E}\Big[ \int_{D}\beta_\vartheta(u_1(t)-u_2(t))\,{\rm d}x \Big] \notag \\
 & \le \mathbb{E}\Big[ - \int_{0}^t \int_{D}  \big(\vec{f}(u_1(s,x))-\vec{f}(u_2(s,x))\big)
 \cdot \grad(u_1(s,x)-u_2(s,x))\beta_{\vartheta}^{\prime\prime}(u_1-u_2) \,{\rm d}x\,{\rm d}s\Big] \notag \\
 & ~+ \mathbb{E}\Big[ \int_{0}^{t}\int_{|z|>0}\int_{D}\int_0^1 (1-\lambda)  \beta_\vartheta^{\prime\prime}\Big( u_1(s,x) -u_2(s,x) + \lambda 
 \big( \eta(u_1(s,x);z)-\eta(u_2(s,x);z)\big)\Big) \notag \\
 & \hspace{4cm} \times \big( \eta(u_1(s,x);z)-\eta(u_2(s,x);z)\big)^2\,{\rm d}\lambda\,{\rm d}x\,m({\rm d}z)\,{\rm d}s\Big] \notag \\
 & \equiv \mathcal{A} + \mathcal{B}\,. \label{inq:sum-a-b-pathwise-uniqueness}
\end{align}
Since $\beta_{\vartheta}^{\prime \prime}(r)\le \frac{M_2}{\vartheta} \textbf{1}_{\{|r| \le \vartheta\}}$ and $\vec{f}$ is a Lipschitz continuous 
function, we have $\mathbb{P}$-a.s., 
\begin{align*}
& \big(\vec{f}(u_1)-\vec{f}(u_2)\big)\cdot \grad(u_1(s,x)-u_2(s,x))\beta_{\vartheta}^{\prime\prime}(u_1-u_2) \\
& \le c_f |u_1 -u_2|\,|\grad(u_1-u_2)| \frac{M_2}{\vartheta} \textbf{1}_{\{|u_1-u_2| \le \vartheta\}} \goto 0 \quad (\vartheta \goto 0)
\end{align*}
for almost every $(t,x)\in D_T$. Moreover
$$\big|\vec{f}(u_1)-\vec{f}(u_2)\big||\grad(u_1(s,x)-u_2(s,x)|\beta_{\vartheta}^{\prime\prime}(u_1-u_2) \le M_2 |\grad(u_1-u_2)|\in L^1(\Omega \times D_T).$$  Thus, 
by dominated convergence theorem we conclude that $\mathcal{A}\goto 0$ as $\vartheta \goto 0$.
\vspace{.1cm}

Next we move on to estimate $\mathcal{B}$. Let 
$$a= u_1(s,x)-u_2(s,x) \quad \text{and}\quad  b= \eta(u_1(s,x);z)-\eta(u_2(s,x);z).$$
Then, we have, in view of the assumption \ref{A3},
\begin{align}
 \mathcal{B} &= \mathbb{E}\Big[\int_{0}^{t}\int_{|z|>0}\int_{D}\int_0^1 (1-\lambda) b^2 \beta_\vartheta^{\prime\prime}\big( a + \lambda b\big)
\,{\rm d}\lambda\,{\rm d}x\,m({\rm d}z)\,{\rm d}s\Big] \notag \\
& \le \mathbb{E}\Big[\int_{0}^{t}\int_{|z|>0}\int_{D}\int_0^1 (1-\lambda) a^2 \beta_\vartheta^{\prime\prime}\big( a + \lambda b\big)(1\wedge |z|^2)
\,{\rm d}\lambda\,{\rm d}x\,m({\rm d}z)\,{\rm d}s\Big].\label{esti:b-0}
\end{align}
Note that $\beta_{\vartheta}^{\prime\prime}$ is non-negative and symmetric around zero. Thus we may assume, without loss of generality, that $a\ge 0$. 
Then by the assumption \ref{A3}
\begin{align}
 u_1(s,x)-u_2(s,x) + \lambda b \ge (1-\lambda^*) \big( u_1(s,x)-u_2(s,x)\big) \notag 
\end{align}
for $\lambda\in [0,1]$. In other words
\begin{align}
 0\le a\le (1-\lambda^*)^{-1}(a+ \lambda b). \label{esti:b-1}
\end{align}
We combine \eqref{esti:b-0} and \eqref{esti:b-1} to obtain 
\begin{align}
 \mathcal{B} \le C(\lambda^*) \mathbb{E}\Big[\int_{0}^{t}\int_{|z|>0}\int_{D}\int_0^1 (1-\lambda) (a+\lambda b)^2
 \beta_\vartheta^{\prime\prime}\big( a + \lambda b\big)(1\wedge |z|^2)
\,{\rm d}\lambda\,{\rm d}x\,m({\rm d}z)\,{\rm d}s\Big].\notag 
\end{align}
In view of \eqref{eq:approx to abosx}, and the assumption on $\eta$ that $\eta(0,z)=0$ for all $z\in \R$, we see that for each $\lambda\in [0,1]$
\begin{align}
 (a+\lambda b)^2  \beta_\vartheta^{\prime\prime}\big( a + \lambda b\big) \le |a+\lambda b|\textbf{1}_{\{0< |a+\lambda b|< \vartheta \}} 
  \le |a+\lambda b| \in L^1(\Omega \times D_T) \notag 
\end{align}
for $m({\rm d}z)$-almost every $z\in \R$. Again $|a+\lambda b|\textbf{1}_{\{0< |a+\lambda b|< \vartheta \}} \goto 0$ as $\vartheta \goto 0$ for almost every 
$(s,x)$ and almost surely. We apply dominated convergence theorem, along with the assumption \ref{A4} to conclude that 
$\mathcal{B}\goto 0$ as $\vartheta \goto 0$. Putting things together and passing to the limit in \eqref{inq:sum-a-b-pathwise-uniqueness}, we have 
\begin{align}
 \mathbb{E}\Big[\int_{D}\big|u_1(t,x)-u_2(t,x)\big|\,{\rm d}x\Big]=0. \notag
\end{align}
In other words, $\mathbb{P}$-a.s.,  $u_1(t,x)=u_2(t,x)$ for almost every $(t,x)$. This yields the uniqueness of path-wise weak solution of the underlying problem \eqref{eq:p-laplace}
with respect to the same stochastic basis. This completes the proof of Theorem \ref{thm:existence-weak}.
\begin{rem}
 Existence of weak solution and path-wise uniqueness guarantee uniqueness in law due to classical Yamada-Watanabe technique \cite{Watanabe} for finite dimensional case; for the infinite dimensional case, see 
 e.g., \cite[Theorems $2$ and $11$]{Ondrejat}.
\end{rem}
\begin{rem}
 Thanks to Skorokhod parameterization, see e.g.,\cite{Dubins, Fernique}, one can prove the following theorem as a generalization of Theorem \ref{thm:existence-weak}: \\
 {\em Let the assumptions \ref{A1}-\ref{A4} be true and $\big(\Omega, \mathcal{F},  \mathbb{P}, \{\mathcal{F}_t\}\big)$ be a given
filtered probability space satisfying the usual hypotheses. Let $N$ be a time-homogeneous Poisson random measure on $\R$ with intensity measure $m(dz)$
defined on $\big(\Omega, \mathcal{F}, \mathbb{P}, \{\mathcal{F}_t\}\big)$, and $\mu$ be a probability measure on $L^2(D)$ such that $\displaystyle \int_{L^2(D)} \Phi(v)\,\mu({\rm d}v) < \infty$ where
$\Phi(v)= \|v\|_{W^{1,p}(D)}^p$. Then, for the problem \eqref{eq:p-laplace}, there exists a weak solution $\hat{\pi}=\big( \hat{\Omega}, \hat{\mathcal{F}},\hat{\mathbb{P}}, \{\hat{\mathcal{F}}_t\},  
 \hat{N}, \hat{u}, \hat{U}\big)$ 
in the sense of Definition \ref{defi:weak-solun} such that \eqref{esti:bound-weak-solun} holds and $\mu=\mathcal{L}(\hat{U})$ on $L^2(D)$.}
\end{rem}

\section{Existence of optimal control: proof of Theorem \ref{thm:existence-optimal-control}}
The objective of this section is to prove existence of a weak optimal solution of \eqref{eq:control-problem} in the sense of Definition \ref{defi:optimal-control} i.e., Theorem \ref{thm:existence-optimal-control}.

\begin{proof} We prove Theorem \ref{thm:existence-optimal-control} in several steps.
\vspace{.1cm}

\noindent{\bf Step I):}
In view of Theorem \ref{thm:existence-weak}, there exists a weak solution of \eqref{eq:p-laplace} with $U=0$, and satisfies the estimate ${\rm ii)}$ of Theorem \ref{thm:existence-weak}. Since $\Psi$ is 
Lipschitz continuous and $ u_{\rm tar}\in L^p(0,T; W_0^{1,p}(D))$, $\Lambda$ is finite. Thus, there exists a minimizing sequence of weak admissible solutions \\
$\pi_n=\big( \Omega_n,  \mathcal{F}_n,  \mathbb{P}_n,\mathbb{F}_n=\{ \mathcal{F}_t^n\}, N_n, u_n, U_n\big)$ such that $\Lambda= \displaystyle \lim_{n\goto \infty}\mathcal{J}(\pi_n)$. Since for
each $n\in \mathbb{N}$, $\pi_n \in \mathcal{U}_{\rm  ad}^w(u_0;T)$, we have, $\mathbb{P}_n$-a.s. for all $t\in [0,T]$
\begin{align}
 u_n(t) &= u_0 + U_n + \int_0^t {\rm div}_x\big( |\nabla u_n|^{p-2} \nabla u_n + \vec{f}(u_n)\big)\,{\rm d}s + \int_0^t \int_{|z|>0} \eta(u_n;z)\widetilde{N}_n({\rm d}z,{\rm d}s) \notag \\
 &= u_0 + U_n + T_{1,n}(t) + T_{2,n}(t)\,. \label{eq:p-laplace-n}
\end{align}
Moreover, since $\Lambda $ is finite, one has the following estimates (uniform in $n$):
\begin{align}\label{esti:apriori-solun-n}
\begin{cases}
\displaystyle  \sup_{n}\mathbb{E}_n\big[ \|U_n\|_{W^{1,p}(D)}^p\big]\le C\,, \\
 \displaystyle \sup_{n}\mathbb{E}_n \Big[\sup_{0\le t\le T} \|u_n(t)\|_{L^2(D)}^2 + \int_0^T \|u_n(t)\|_{W_0^{1,p}(D)}^p\,{\rm d}t \Big] \le C\,,
 \end{cases}
\end{align}
where $\mathbb{E}_n$ denotes the expectation with respect to $\mathbb{P}_n$. 
\vspace{.1cm}

\noindent{\bf Step II):} By proving Aldous condition for the sequence $\{ u_n\}$ in $W^{-1,p^\prime}(D)$ and then applying Theorem \ref{thm:for-tightness} along with the
uniform-estimate \eqref{esti:apriori-solun-n}, one can establish the tightness of $\{ \mathcal{L}(u_n)\}$ on $(\mathcal{Z},\mathcal{T})$. Moreover, due to the uniform-bound \eqref{esti:apriori-solun-n}, and 
the tightness of the family of laws $\{ \mathcal{L}(N_n({\rm d}z,{\rm d}t))\}$ on $M_{\bar{\mathbb{N}}}(\R \times [0,T])$, the set $\{ \mathcal{L}(u_n, N_n, U_n)\}$ is tight in $\mathcal{X}$. Therefore, by 
 \cite[Corollary 2]{motyl2013}, there exist a subsequence of $\{n\}$, still
we denote it by same $\{n\}$, a probability space $(\Omega^*, \mathcal{F}^*, \mathbb{P}^*)$ and, on this space $\mathcal{X}$-valued 
random variables $(u^*, U^*, N^*)$ and $(u_n^*, U_n^*, N_n^*)$ such that 
\begin{itemize}
 \item [i).] $\mathcal{L}(u_n^*, U_n^*, N_n^*)=\mathcal{L}(u_n, U_n, N_n)$ for all $n\in \mathbb{N}$,
 \item [ii).] $(u_n^*, U_n^*, N_n^*)\goto (u^*,U^*, N^*)$ in $\mathcal{X}\quad \mathbb{P}^*$-a.s. ~~~~~$(n\goto \infty)$, 
 \item [iii).] $N_n^*(\omega^*)= N*({\omega}^*)$ for all ${\omega}^*\in {\Omega}^*$.
\end{itemize}
The sequences $\{u_n^*\}$ and $\{ U_n^*\}$ satisfy the same estimate as the original sequences $\{ u_n\}$ and $\{ U_n\}$ respectively. In particular,
\begin{align}\label{esti:apriori-solun-change-variable-n}
\begin{cases}
\displaystyle  \sup_{n}\mathbb{E}^*\big[ \|U_n^*\|_{W^{1,p}(D)}^p\big]\le C\,, \\
\displaystyle \sup_{n}\mathbb{E}^*\Big[\sup_{0\le t\le T} \|u_n^*(t)\|_{L^2(D)}^2 + \int_0^T \|u_n^*(t)\|_{W_0^{1,p}(D)}^p\,{\rm d}t \Big] \le C\,.
\end{cases}
\end{align}
Moreover, in view of \eqref{eq:p-laplace-n} and ${\rm i)}$ of {\bf Step II}, one can conclude $\mathbb{P}^*$-a.s., 
\begin{align}
 u_n^*(t) &= u_0 + U_n^* + \int_0^t {\rm div}_x\big( |\nabla u_n^*(s)|^{p-2} \nabla u_n^*(s) + \vec{f}(u_n^*(s))\big)\,{\rm d}s \notag \\
 & \hspace{3cm}+ \int_0^t \int_{|z|>0} \eta(u_n^*(s);z)\widetilde{N}_n^*({\rm d}z,{\rm d}s)
 \,. \label{eq:p-laplace-change-variable-n}
\end{align}
\vspace{.1cm}

\noindent{\bf Step III):} Let $\mathbb{F}^*$ be the natural filtration of $(u_n^*, N_n^*, u^*, N^*)$. Since  $N_n^*(\omega^*)= N^*({\omega}^*)$ for all ${\omega}^*\in {\Omega}^*$, $N_n^*$ and $N^*$ are the time 
homogeneous Poisson random measures on $\mathbb{R}$ over the stochastic basis $(\Omega^*, \mathcal{F}^*, \mathbb{P}^*, \mathbb{F}^*)$. Using the similar arguments as in Lemmas \ref{lem:convergence-1}- 
\ref{lem:conv-martingale-1} and \ref{lem:convergence-allterm} along with {\bf step ${\rm ii)}$} in subsection \ref{subsec:existence}, one can pass to the limit in \eqref{eq:p-laplace-change-variable-n}
and conclude that the $W_0^{1,p}(D)$-valued $\mathbb{F}^*$-predictable stochastic process $u^*$ satisfies the following:
$\mathbb{P}^*$-a.s.~and a.e.~$t\in [0,T]$,

\begin{align}
 \big( u^*(t), \phi \big)_{L^2(D)}= \big( u_0 + U^*, \phi \big)_{L^2(D)} +  \int_0^t \Big\langle \mbox{div}_x \big(|\nabla u^*(s)|^{p-2}\nabla u^*(s) + \vec{f}(u^*(s))\big), \phi\Big\rangle \,{\rm d}s \notag  \\
  + \Big\langle \int_0^t \int_{|z|>0} \eta(u^*(s);z) \widetilde{N}^*({\rm d}z,{\rm d}s), \phi \Big\rangle \quad \forall~\phi \in W_0^{1,p}(D)\,. \label{eq:p-laplace-new-optimal}
\end{align}
Since $u^* \in \mathbb{D}([0,T]; L_w^2(D))$, \eqref{eq:p-laplace-new-optimal} holds true for all $t\in [0,T]$ and all $\phi \in W_0^{1,p}(D)$, and hence $\pi^*=(\Omega^*, \mathcal{F}^*,
\mathbb{P}^*, \mathbb{F}^*, N^*, u^*, U^*) \in \mathcal{U}_{\rm ad}^w(u_0;T)$. Moreover,  $(u^*, U^*)$ satisfies the estimate 
\eqref{esti:bound-weak-solun}.
\vspace{.1cm}

\noindent{\bf Step IV):} Since $\pi^* \in \mathcal{U}_{\rm ad}^w(u_0;T)$, obviously $\Lambda \le \mathcal{J}(\pi^*)$. We now show that $\mathcal{J}(\pi^*)\le \Lambda$. Note that, the mapping
\begin{align*}
 S: L^2(D)\times W^{1,p}(D) &\mapsto [0,\infty] \\
 (u, U) &\mapsto \|u-u_{\rm tar}\|_{L^2(D)}^2 + \|U\|_{W^{1,p}(D)}^p 
\end{align*}
is a measurable, non-negative and lower semi-continuous convex function. Thus, invoking ${\rm i)}$-${\rm iii)}$ of {\bf step II} along with Fatou's lemma, we get
\begin{align*}
 \mathcal{J}(\pi^*)&=\mathbb{E}^*\Big[ \int_0^T \|u^*(t)-u_{\rm tar}(t)\|_{L^2(D)}^2\,{\rm d}t + \|U^*\|_{W^{1,p}(D)}^p\Big] + \mathbb{E}^*\big[\Psi(u^*(T))\big] \\
 & \le \liminf_{n\goto \infty} \Bigg\{ \mathbb{E}^*\Big[ \int_0^T \|u_n^*(t)-u_{\rm tar}(t)\|_{L^2(D)}^2\,{\rm d}t + \|U_n^*\|_{W^{1,p}(D)}^p\Big] + \mathbb{E}^*\big[\Psi(u_n^*(T))\big] \Bigg\} \\
 &= \liminf_{n\goto \infty} \Bigg\{ \mathbb{E}_n\Big[ \int_0^T \|u_n(t)-u_{\rm tar}(t)\|_{L^2(D)}^2\,{\rm d}t + \|U_n\|_{W^{1,p}(D)}^p\Big] + \mathbb{E}_n\big[\Psi(u_n(T))\big] \Bigg\}\\
 &= \liminf_{n\goto \infty} \mathcal{J}(\pi_n)=\Lambda\,.
\end{align*}
This implies that $\pi^*=(\Omega^*, \mathcal{F}^*,\mathbb{P}^*, \mathbb{F}^*, N^*, u^*, U^*)$ is a weak optimal solution of the control problem \eqref{eq:control-problem}, and $U^*$ is an optimal control.
This finishes the proof of Theorem \ref{thm:existence-optimal-control}.
\end{proof}
\vspace{2cm}

\noindent{\bf Acknowledgements:} The author would like to acknowledge the financial support by Department of Science and Technology, Govt. of India-the INSPIRE fellowship~(IFA18-MA119).

\end{document}